\numberwithin{equation}{section}
\numberwithin{figure}{section}
  \theoremstyle{plain}
  \newtheorem*{conjecture*}{\protect\conjecturename}
\theoremstyle{plain}
\newtheorem{thm}{\protect\theoremname}
  \theoremstyle{plain}
  \newtheorem{prop}[thm]{\protect\propositionname}
  \theoremstyle{plain}
  \newtheorem{cor}[thm]{\protect\corollaryname}
  \theoremstyle{remark}
  \newtheorem{rem}[thm]{\protect\remarkname}
  \theoremstyle{plain}
  \newtheorem{lem}[thm]{\protect\lemmaname}
\theoremstyle{definition}
\numberwithin{Theorem}{section} \numberwithin{equation}{section}
\def\CC{{\mathbb C}}
\def\DD{{\mathbb D}}
\def\FF{{\mathbb F}}
\def\HH{{\mathbb H}}
\def\PP{{\mathbb P}}
\def\QQ{{\mathbb Q}}
\def\RR{{\mathbb R}}
\def\ZZ{{\mathbb Z}}
\newcommand{\NS}{\operatorname{NS}}
  \providecommand{\conjecturename}{Conjecture}
  \providecommand{\corollaryname}{Corollary}
  \providecommand{\lemmaname}{Lemma}
  \providecommand{\propositionname}{Proposition}
  \providecommand{\remarkname}{Remark}
\providecommand{\theoremname}{Theorem}
\begin{document}

\newcommand{\Gal}{{\rm Gal}} 
\newcommand{\Pic}{{\rm Pic}} 

\title[Bounded negativity, Miyaoka-Sakai inequality]{Bounded negativity, Miyaoka-Sakai inequality and elliptic curve configurations}

\addtolength{\textwidth}{0mm}
\addtolength{\hoffset}{-0mm} 
\addtolength{\textheight}{0mm}
\addtolength{\voffset}{-0mm} 

\global\long\global\long\def\Alb{{\rm Alb}}
 \global\long\global\long\def\Jac{{\rm Jac}}
 \global\long\global\long\def\Hom{{\rm Hom}}
 \global\long\global\long\def\End{{\rm End}}
 \global\long\global\long\def\aut{{\rm Aut}}
 \global\long\global\long\def\NS{{\rm NS}}
 \global\long\global\long\def\SSm{{\rm S}}
 \global\long\global\long\def\psl{{\rm PSL}}
 \global\long\global\long\def\CC{\mathbb{C}}
 \global\long\global\long\def\BB{\mathbb{B}}
 \global\long\global\long\def\PP{\mathbb{P}}
 \global\long\global\long\def\QQ{\mathbb{Q}}
 \global\long\global\long\def\RR{\mathbb{R}}
 \global\long\global\long\def\FF{\mathbb{F}}
 \global\long\global\long\def\DD{\mathbb{D}}
 \global\long\global\long\def\NN{\mathbb{N}}
 \global\long\global\long\def\ZZ{\mathbb{Z}}
 \global\long\global\long\def\HH{\mathbb{H}}
 \global\long\global\long\def\gal{{\rm Gal}}
 \global\long\global\long\def\OO{\mathcal{O}}
 \global\long\global\long\def\pP{\mathfrak{p}}
 \global\long\global\long\def\pPP{\mathfrak{P}}
 \global\long\global\long\def\qQ{\mathfrak{q}}
 \global\long\global\long\def\mm{\mathcal{M}}
 \global\long\global\long\def\aaa{\mathfrak{a}}

\author{Xavier Roulleau}
\begin{abstract}
Similarly to the linear Harbourne constant recently introduced in
\cite{BDHH}, we study the elliptic $H$-constants of $\mathbb{P}^{2}$
and Abelian surfaces. We exhibit configurations of smooth plane cubic
curves whose Harbourne index is arbitrarily close to $-4$. As a Corollary,
we obtain that the global $H$-constant of any surface $X$ is less
or equal to $-4$. Related to these problems, we moreover give a new
inequality for the number and multiplicities of singularities of elliptic
curves arrangements on Abelian surfaces, inequality which has a close
similarity to the one of Hirzebruch for arrangements of lines in the
plane.
\end{abstract}
\maketitle

\section*{Introduction}

The Harbourne constant (for short $H$-constant) of a surface and
some related variants of it have been recently introduced in \cite{BDHH},
bringing new problems and open questions on curves and their singularities
on surfaces. As explained there, these constants measure the local
negativity of curves on surfaces, in analogy with the local positivity
measured by Seshadri constants. These constants come from the context
of the Bounded Negativity Conjecture (BNC):
\begin{conjecture*}
\label{Conjecture BNC}Let $X_{/\CC}$ be a smooth projective surface.
There exists an integer $b(X)$ such that for every (reduced) curve
$C$ on $X$, we have $C^{2}\geq-b(X)$.
\end{conjecture*}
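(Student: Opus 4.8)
This conjecture is still open in general, so the following is the natural line of attack together with the point at which it stalls.

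\textbf{Step 1 (reduction to irreducible negative curves).} I would first pass, via Zariski decomposition, to \emph{irreducible} curves. Write a reduced curve $C$ as $C = P + N$ with $P$ nef and $N = \sum_{k=1}^{r} n_{k}E_{k}$ its negative part: the $E_{k}$ are distinct irreducible curves, $n_{k} > 0$, and the matrix $(E_{j}\cdot E_{k})$ is negative definite. Then $C^{2} = P^{2} + N^{2} \geq N^{2}$ because $P$ is nef and $P\cdot N = 0$. Negative definiteness forces $E_{k}^{2} < 0$ and forces the $E_{k}$ to be linearly independent in $\NS(X)_{\RR}$, so $r \leq \rho(X)$; and because $C$ is reduced one has $N \leq C$, hence $0 < n_{k}\leq 1$. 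Using $E_{j}\cdot E_{k} \geq 0$ for $j \neq k$ we get $C^{2} \geq N^{2} \geq \sum_{k} n_{k}^{2}E_{k}^{2} \geq -\rho(X)\,b_{0}$, where $b_{0}$ is any lower bound for $-E^{2}$ over irreducible curves $E$ with $E^{2} < 0$. Thus the conjecture for $X$ is equivalent to the existence of such a uniform $b_{0}(X)$, and it is precisely at this step that the hypothesis ``$C$ reduced'' is used.

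\textbf{Step 2 (the cases one can settle).} Next I would dispatch the surfaces for which $b_{0}$ is visibly available. If $\rho(X) = 1$ (e.g. $X = \PP^{2}$) there are no negative curves. If $-K_{X}$ is nef --- abelian, bielliptic and del Pezzo surfaces, rational elliptic surfaces, and more --- adjunction gives $E^{2} = 2p_{a}(E) - 2 + (-K_{X}\cdot E) \geq -2$ for every irreducible $E$, so $b_{0} = 2$; on abelian and bielliptic surfaces, which contain no rational curves, even $E^{2} \geq 0$. On K3 and Enriques surfaces $K_{X}$ is numerically trivial and every irreducible negative curve is a $(-2)$-curve, so again $b_{0} = 2$. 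On a $\PP^{1}$-bundle over a smooth curve the only negative curve is the negative section, of fixed square. Finally, since strict transforms satisfy $\widetilde{C}^{2} = C^{2} - \sum m_{i}^{2} \leq C^{2}$ under a birational morphism, BNC descends along blow-downs; taking a common resolution, the rational case reduces to blow-ups of $\PP^{2}$.

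\textbf{Step 3 (the obstacle).} What remains --- and this is where the known approaches stall --- are the surfaces carrying infinitely many negative curves with no a priori bound on their self-intersection, the prototype being $\PP^{2}$ blown up at $n \geq 10$ very general points. There a bound $b_{0}$ would say that no irreducible plane curve of degree $d$ has multiplicities $m_{1},\ldots,m_{n}$ at the $n$ points with $\sum m_{i}^{2} > d^{2} + b_{0}$, a statement in the orbit of the Nagata and SHGH conjectures for which no proof is known. The quantitative core of the present paper --- bounding how negative an arrangement of curves can be, through $H$-constants and Miyaoka--Sakai / Hirzebruch-type inequalities --- is an attack on exactly this numerics; obtaining, on an \emph{arbitrary} surface, a configuration-free inequality of that shape strong enough to bound $b_{0}$ would be the heart of a general proof, and I do not see how to produce one with current techniques.
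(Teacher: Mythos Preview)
The paper does not prove this statement: it is the Bounded Negativity Conjecture, stated in the introduction purely as motivation for the $H$-constants studied later, and it remains open. There is therefore no ``paper's own proof'' to compare against. You have correctly identified this, and your write-up is not a failed proof but an honest sketch of the known partial results together with the point at which current methods break down.

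A couple of remarks on the content of your sketch. In Step~1, the reduction to irreducible curves via Zariski decomposition is essentially the argument of Bauer--Pokora--Schmitz (reference \cite{BPS} in the paper); your use of $P^{2}\geq 0$ for $P$ nef and $0<n_{k}\leq 1$ for $C$ reduced is correct. In Step~2, the list of cases where $-K_{X}$ is nef or numerically trivial is standard and right. Step~3 correctly locates the obstruction at blow-ups of $\PP^{2}$ in $\geq 10$ general points, and your closing sentence is on point: the paper's contribution is precisely to sharpen the numerics of $H$-constants (obtaining $H_{X}\leq -4$ for every surface, and Hirzebruch-type inequalities for elliptic configurations on abelian surfaces), not to resolve BNC itself.
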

This ancient and now intensively studied conjecture (\cite{BDHH}, \cite{BHK}, \cite{BPS}, \cite{Ciliberto}, \cite{Koziarz}, \cite{Moeller})
is trivially true for the plane, but we do not know the behavior of
the problem if one takes blow-ups of it. The $H$-constant was introduced
to approach that question. For a blow-up $X'\to X$ of a surface $X$
at a set $\mathcal{P}$ of $s>0$ distinct points and $C\hookrightarrow X$
a curve, we denote by $\bar{C}$ the strict transform of $C$ in $X'$.
The $H$-constant of $X$ is defined by 
\[
H_{X}:=\inf_{C,\mathcal{P}}H(C,\mathcal{P})
\]
 where 
\[
H(C,\mathcal{P})=\frac{(\bar{C})^{2}}{s}
\]
for $C\hookrightarrow X$ varying over all reduced curves on $X$,
and the strict transform of $C$ varying over the blows-ups at sets
$\mathcal{P}$ of $s$ distinct points on $X$ ($s>0$ any). If finite,
the $H$-constant has the interesting property that whenever BNC holds
for $X$, then it holds for any blow-ups of it. 

Variations of the $H$-constants are proposed in \cite{BDHH}, among
them the linear $H$-constant $H_{L}$ for the plane. Here $X=\PP^{2}$
and the infimum $H_{L,\PP^{2}}=\inf_{C,\mathcal{P}}H(C,\mathcal{P})$
is taken over every union $C$ of lines in $\PP^{2}$. Using Hirzebruch
bounds on the singularities of lines configurations, the authors proves
that $H_{L,\PP^{2}}\geq-4.$ They moreover found an example of union
of lines $C$ such that its Harbourne index
\[
H(C)=\min_{\mathcal{P}}H(C,\mathcal{P})
\]
is very negative: $H(C)=-225/67$, therefore one knows that $-225/67\geq H_{L,\PP^{2}}\geq-4$,
and $-225/67\geq H_{L,\PP^{2}}\geq H_{\PP^{2}}$. 

In this paper, we similarly propose to study the elliptic $H$-constant
$H_{El,X}$ defined by 
\[
H_{El,X}=\inf_{C,\mathcal{P}}H(C,\mathcal{P})=\inf_{C}H(C)
\]
for unions $C$ of smooth elliptic curves on the surface $X$. Let
$C$ be such a configuration of elliptic curves on an Abelian surface
$A$. Like for an arrangement of lines in the plane, the curve $C$
has only ordinary singularities. For $k\geq2$, we denote by $t_{k}$
be the number of $k$-points on $C$ i.e. the points with multiplicity
$k$. Let be $f_{0}=\sum t_{k}$ and $f_{1}=\sum kt_{k}$. 
\begin{thm}
\label{prop:elliptic h const >-4}For any set $\mathcal{P}$ of points,
we have 
\begin{equation}
H(C,\mathcal{P})\geq H(C)=-\frac{f_{1}}{f_{0}}\geq\frac{t_{2}+\frac{1}{4}t_{3}}{f_{0}}-4\geq-4\label{eq:thm3}
\end{equation}
The elliptic H-constant satisfies $H_{El,A}\geq-4$. The $H$-index
of $C$ equals $-4$ if and only if the only singularities on $C$
are $4$-points.\\
The $H$-constant and elliptic $H$-constant are isogeny invariants
: if $A$ and $B$ are isogenous, then $H_{A}=H_{B}$ and $H_{El,A}=H_{El,B}$. 
\end{thm}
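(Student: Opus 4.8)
\textbf{Reduction to the arrangement invariant $-f_{1}/f_{0}$.} The plan is first to show that for an elliptic configuration the $H$-index is governed by a single combinatorial quantity. Every smooth elliptic curve $E\subset A$ has $E^{2}=0$ by adjunction, since $K_{A}=0$; and because $C$ has only ordinary singularities, distinct components $C_{i},C_{j}$ meet transversally, every point of $C_{i}\cap C_{j}$ is a singular point of $C$, and $C_{i}\cdot C_{j}=\#(C_{i}\cap C_{j})$. Summing over pairs of components, $C^{2}=\sum_{i\neq j}C_{i}\cdot C_{j}=\sum_{p}m_{p}(m_{p}-1)=f_{2}-f_{1}$, where $m_{p}=\mathrm{mult}_{p}(C)$ and $f_{2}:=\sum_{k}k^{2}t_{k}$. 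Let $\pi\colon X\to A$ be the blow-up of the $f_{0}$ singular points; since the blow-ups separate the branches at each singular point, the strict transform $\bar{C}$ is a disjoint union of smooth elliptic curves, and $\bar{C}^{2}=C^{2}-\sum_{p}m_{p}^{2}=(f_{2}-f_{1})-f_{2}=-f_{1}$, so $H(C,\mathrm{Sing}\,C)=-f_{1}/f_{0}$. It remains to show this is the minimum over all $\mathcal{P}$: writing $\bar{C}^{2}=C^{2}-\sum_{p\in\mathcal{P}}\mathrm{mult}_{p}(C)^{2}$, I would split $\mathcal{P}$ according to whether a point lies on $\mathrm{Sing}\,C$ or not, and push an elementary estimate through, using $\mathrm{mult}_{p}(C)\geq 2$ at singular points together with the bound $f_{1}\leq 4f_{0}$ established below. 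This gives $H(C,\mathcal{P})\geq -f_{1}/f_{0}$ for every $\mathcal{P}$, so the minimum is attained and equals $-f_{1}/f_{0}$; in particular $H(C,\mathcal{P})\geq H(C)=-f_{1}/f_{0}$.

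\textbf{The Hirzebruch-type inequality: the heart of the matter.} Clearing denominators, the middle inequality of \eqref{eq:thm3} is equivalent to $4f_{0}-f_{1}\geq t_{2}+\tfrac14 t_{3}$, i.e. to \[ t_{2}+\tfrac34 t_{3}\;\geq\;\sum_{k\geq 5}(k-4)t_{k}, \] the Abelian-surface analogue of Hirzebruch's inequality for line arrangements, with no ``$+d$'' term because $K_{A}=0$. I would prove it via the logarithmic Miyaoka--Sakai inequality. Already the crude consequence $f_{1}\leq 4f_{0}$ drops out of applying it to the pair $(X,\bar{C})$ above: from $K_{X}=\sum_{p}E_{p}$ and $\bar{C}=\pi^{*}C-\sum_{p}m_{p}E_{p}$ one computes $(K_{X}+\bar{C})^{2}=f_{1}-f_{0}$, while $e(X\setminus\bar{C})=e(X)=f_{0}$ (since $e(A)=0$, each blow-up raises $e$ by $1$, and $e(\bar{C})=0$), so $(K_{X}+\bar{C})^{2}\leq 3\,e(X\setminus\bar{C})$ reads $f_{1}-f_{0}\leq 3f_{0}$. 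Obtaining the sharper bound with the $t_{2},t_{3}$ terms requires upgrading to the orbifold Bogomolov--Miyaoka--Yau inequality for a suitable Abelian (Kummer, resp. cyclic) cover of $A$ branched along $C$, exactly as in Hirzebruch's computation for lines: singular points of multiplicity $2$ and $3$, being smaller than the exponent of the cover, contribute the favourable quantities on the left, and letting the exponent grow produces the displayed inequality; configurations for which the relevant pair fails to be of log-general type (too few or too special components) are handled separately and elementarily. This is the step carrying essentially all of the technical weight.

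\textbf{The chain, the extremal case, and $H_{El,A}\geq -4$.} Granting the inequality above, the chain \eqref{eq:thm3} is immediate: $-f_{1}/f_{0}=-4+(4f_{0}-f_{1})/f_{0}\geq -4+(t_{2}+\tfrac14 t_{3})/f_{0}\geq -4$. Taking the infimum over all configurations of smooth elliptic curves on $A$ yields $H_{El,A}\geq -4$. For the equality statement, $H(C)=-4\iff f_{1}=4f_{0}$; then the last inequality in the chain becomes an equality, forcing $t_{2}+\tfrac14 t_{3}=0$, i.e. $t_{2}=t_{3}=0$, and combining this with $\sum_{k}kt_{k}=4\sum_{k}t_{k}$ gives $\sum_{k\geq 4}(k-4)t_{k}=0$, hence $t_{k}=0$ for all $k\geq 5$: only $4$-points occur. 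The converse is the identity $f_{1}=4t_{4}=4f_{0}$.

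\textbf{Isogeny invariance.} Let $\phi\colon A\to B$ be an isogeny of degree $n$; it is \'{e}tale. For a reduced curve $D\subset B$ the preimage $\phi^{-1}(D)=\phi^{*}D$ is reduced, $\phi^{-1}(D)^{2}=n\,D^{2}$ by the projection formula, and $\mathrm{mult}_{q}(\phi^{-1}D)=\mathrm{mult}_{\phi(q)}(D)$; hence for every finite $\mathcal{P}\subset B$ one has $H(\phi^{-1}D,\phi^{-1}\mathcal{P})=H(D,\mathcal{P})$, so $H(\phi^{-1}D)\leq H(D)$ and therefore $H_{A}\leq H_{B}$. The same reasoning applied to a complementary isogeny $\psi\colon B\to A$ with $\psi\circ\phi=[n]_{A}$ gives $H_{B}\leq H_{A}$, so $H_{A}=H_{B}$. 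If moreover $D$ is a union of smooth elliptic curves, then so is $\phi^{-1}(D)$ (a connected \'{e}tale cover of an elliptic curve is again an elliptic curve), and its numbers $t_{k}$ are exactly $n$ times those of $D$, so by the formula of the first paragraph $H(\phi^{-1}D)=H(D)$ exactly; running the argument inside the class of elliptic configurations yields $H_{El,A}=H_{El,B}$. The only genuine obstacle in all of this is the refined Hirzebruch-type inequality of the second paragraph.
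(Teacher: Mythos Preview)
Your approach matches the paper's: reduce to $H(C)=-f_1/f_0$ via $C^2=f_2-f_1$, establish the Hirzebruch-type bound by a branched-cover/BMY argument, read off the equality case combinatorially, and get isogeny invariance from \'etaleness in both directions. Your sketch for $H(C,\mathcal{P})\geq -f_1/f_0$ (splitting $\mathcal{P}$, using $m_p\geq 2$ at singular points together with $f_1\leq 4f_0$) is correct and in fact more explicit than the paper, which simply cites \cite[Thm.~3.3]{BDHH}.

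One correction to your outline of the crux: the $t_2,t_3$ refinement does \emph{not} come from ``letting the exponent grow''. The paper builds a $(\ZZ/n\ZZ)^{d}$-cover $X_n\to A$ branched with index $n$ over $C$ (via Namba's criterion, after passing to an isogenous surface on which each $C_i$ is $n$-divisible) and computes $(3c_2-K^2)(X_n)/n^{d-2}$ as a quadratic in $n$; the limit $n\to\infty$ yields only the triviality $f_0\geq 0$. The content comes from the specific value $n=3$: ordinary BMY on $X_3$ gives $4f_0-f_1\geq t_2$, and the extra $\tfrac14 t_3$ is then extracted by Miyaoka's refinement $3c_2-K^2\geq -\sum D_j^2$ for disjoint smooth elliptic curves $D_j$, applied to the $3^{d-3}t_3$ elliptic curves of self-intersection $-3$ lying over the $3$-points of $C$. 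Your direct log-BMY computation on the blow-up $(X,\bar C)$, giving $(K_X+\bar C)^2=f_1-f_0\leq 3f_0=3e(X\setminus\bar C)$, is a clean shortcut to the crude $f_1\leq 4f_0$ that the paper does not take, but it cannot by itself produce the $t_2,t_3$ terms; for those one genuinely needs the $n=3$ cover together with Miyaoka's theorem.
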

Let $A$ be either the surface $(\CC/\ZZ[\alpha])^{2}$, where $\alpha^{2}+\alpha+1=0$
or $(\CC/\ZZ[i])^{2}$, $i^{2}=-1$. By the constructions of Hirzebruch
\cite{Hirz84} and Holzapfel \cite[Example 5.4]{Holzapfel}, there
exist configurations $C$ of elliptic curves on $A$ such that their
$H$-index satisfy $H(C)=-4$. We thus see that the bound $H_{El,A}\geq-4$
is optimal. Actually, by Kobayashi's results \cite{Kobayashi}, the
equality $H(C)=-4$ is attained if and only if the complement of $C$
in $A$ is birational to an open ball quotient surface. Inequality
\[
H(C)=-\frac{f_{1}}{f_{0}}\geq\frac{t_{2}+\frac{1}{4}t_{3}}{f_{0}}-4
\]
in Theorem \ref{prop:elliptic h const >-4} is a consequence of a
more general result (Theorem \ref{thm:Refined Miyaoka Sakai inequality})
on the $H$-constant of curve configurations of any genus proved by
using $(\ZZ/n\ZZ)^{d}$-covers of Abelian surfaces. We obtain in particular
the following result:
\begin{thm}
\label{thm:For-a-configurationMain}For a configuration of elliptic
curves on an Abelian surface, one has 
\[
t_{2}+\frac{3}{4}t_{3}\geq\sum_{k\geq5}(2k-9)t_{k}.
\]

\end{thm}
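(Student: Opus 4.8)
The plan is to obtain the asserted inequality as the specialisation to $n=5$ of the Refined Miyaoka--Sakai inequality (Theorem~\ref{thm:Refined Miyaoka Sakai inequality}), and it is worth first noting why it is strictly stronger than the bound $H(C)\ge -4$ already obtained in Theorem~\ref{prop:elliptic h const >-4}. Since the $k=4$ summand drops ($4-k=0$) one has $4f_0-f_1=\sum_k(4-k)t_k=2t_2+t_3-\sum_{k\ge 5}(k-4)t_k$, so the inequality $-\frac{f_1}{f_0}\ge\frac{t_2+\frac14 t_3}{f_0}-4$ of Theorem~\ref{prop:elliptic h const >-4} is equivalent to $t_2+\frac34 t_3\ge\sum_{k\ge 5}(k-4)t_k$. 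As $2k-9=k-4$ for $k=5$ while $2k-9>k-4$ for $k\ge 6$, Theorem~\ref{thm:For-a-configurationMain} is a genuine sharpening, and the additional strength must come from the abelian covering construction itself.

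Accordingly I would run (equivalently: invoke Theorem~\ref{thm:Refined Miyaoka Sakai inequality} and specialise) the Hirzebruch-type argument adapted to the abelian surface $A$. Write $C=E_1+\dots+E_d$. First reduce to a convenient isogenous surface: the pullback of $C$ along an isogeny $\phi\colon B\to A$ is étale, hence is again a union of smooth elliptic curves, each $E_i$ being replaced by a disjoint union of translates of an elliptic curve isogenous to $E_i$; each ordinary $k$-point of $C$ is replaced by $\deg\phi$ ordinary $k$-points of $\phi^{*}C$, and no further singularities are created. Thus every $t_k$ is multiplied by the single integer $\deg\phi$, and all inequalities among the $t_k$ are unaffected; so we may assume, after replacing $A$ by $B$, that the line bundles needed below are available. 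Next blow up the $f_0$ singular points, $\sigma\colon Y\to A$: the strict transforms $\bar E_i$ are then pairwise disjoint smooth elliptic curves, and together with the exceptional curves they form a normal crossing configuration. Finally take the $(\ZZ/n\ZZ)^{d}$-cover $\pi\colon X\to Y$ ramified with index $n$ along $\sum_i\bar E_i$ (with the prescribed behaviour over the exceptional curves) and let $\bar X$ be its minimal smooth model.

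One then computes $c_1^2(\bar X)$ and $c_2(\bar X)$ from the standard formulas for abelian covers, resolving the cyclic quotient singularities of $X$ that lie over the $k$-points by their Hirzebruch--Jung strings. The feature special to the abelian case is that $K_A=0$ and $e(A)=e(\bar E_i)=0$, so all ``global'' terms in these formulas vanish and, after dividing out by the order of the covering group, one is left with a sum of purely local contributions, one per singular point of $C$, each of them a function of $n$ and of the multiplicity $k$ of the point. Inserting this into the Bogomolov--Miyaoka--Yau inequality $c_1^2(\bar X)\le 3c_2(\bar X)$ --- which applies once $\bar X$ is of general type; the borderline configurations with only $4$-points (for which $A\setminus C$ is a ball quotient) give equality, and the remaining degenerate configurations can be handled by hand --- yields, for each admissible $n$, an inequality $P_n(t_2,t_3,\dots)\ge 0$. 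Taking $n=5$ makes the floor quantities $\lfloor k/n\rfloor$ governing the resolution data collapse, and $P_5$ simplifies to exactly $t_2+\frac34 t_3-\sum_{k\ge 5}(2k-9)t_k$, which is the assertion.

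The step I expect to be the main obstacle is the Chern-number bookkeeping: describing precisely the singularities of $X$ over a $k$-point and their minimal resolution, reading off the resulting contributions to $c_1^2$ and to $c_2$, and then checking that the specialisation $n=5$ really produces the coefficients $2k-9$ and not merely $k-4$ or some intermediate value. A secondary technical point is the choice of the isogeny $\phi$ and of the characters defining the $(\ZZ/n\ZZ)^{d}$-cover so that $\pi$ exists and $X$ is smooth away from the prescribed locus, together with confirming that $\bar X$ is indeed of general type, so that BMY may be applied in its sharp form.
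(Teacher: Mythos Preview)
Your overall architecture (pass to an isogenous abelian surface, build the $(\ZZ/n\ZZ)^{d}$-cover, compute Chern numbers, apply BMY) is exactly the paper's, but the two decisive choices are wrong, and with them the argument does not reach the stated inequality.

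First, the value $n=5$ does not produce the coefficients you claim. In the paper's setup one blows up the points of multiplicity $\geq 3$ on $A$ and takes the smooth $(\ZZ/n\ZZ)^{d}$-cover $X_n$ of the blow-up, branched to order $n$ along the strict transform of $C$; there are no cyclic quotient singularities to resolve and no $\lfloor k/n\rfloor$ terms. For a configuration of elliptic curves ($g=1$) one gets the closed formula
\[
(3c_2-K_{X_n}^2)/n^{d-2}=f_0\,n^{2}+2(f_0-f_1)n+2f_1+f_0-4t_2,
\]
and plain BMY at $n=5$ gives $9f_0-2f_1\geq t_2$, i.e.\ $4t_2+3t_3+t_4\geq\sum_{k\geq 5}(2k-9)t_k$, which is strictly weaker than the theorem. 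No choice of $n$ in the bare BMY inequality yields $t_2+\tfrac34 t_3$ on the left.

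Second, and this is the missing idea: the paper takes $n=2$ and then applies Miyaoka's \emph{refined} inequality $3c_2-K^2\geq \tfrac{9}{2}m-\sum D_j^2$, which accounts for disjoint $(-2)$-curves and negative elliptic curves on $X_2$. For $n=2$ the curves lying over the $3$-points are $(-2)$-curves (there are $2^{d-3}t_3$ of them) and those over the $4$-points are elliptic with self-intersection $-4$ (there are $2^{d-4}t_4$). Feeding this into Miyaoka's bound gives $(3c_2-K_{X_2}^2)/2^{d-2}\geq \tfrac{9}{4}t_3+t_4$, hence $9f_0-2f_1\geq 4t_2+\tfrac{9}{4}t_3+t_4$, and expanding $9f_0-2f_1=\sum_k(9-2k)t_k$ yields exactly $t_2+\tfrac34 t_3\geq\sum_{k\geq 5}(2k-9)t_k$. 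So the sharpening over the $H(C)\geq -4$ bound (which, as you correctly note, is the $n=3$ specialisation plus the elliptic-curve correction over $3$-points) comes not from a larger $n$ but from going down to $n=2$ and using Miyaoka's correction terms for the rational and elliptic curves that then appear on the cover.
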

This is the very analog of the inequality $t_{2}+\frac{3}{4}t_{3}\geq d+\sum_{k\geq5}(2k-9)t_{k}$
due to Hirzebruch for a configuration of $d\geq6$ lines in $\mathbb{P}^{2}$
such that $t_{d}=t_{d-1}=t_{d-2}=0$ (see \cite[eq. (9)]{Hirz86}).

We also obtain the following results for the elliptic and global $H$-constants
of the plane:
\begin{prop}
\label{thm:configurations}There exist configurations $C_{n}$ of
smooth cubic curves in $\PP^{2}$ such that for $Sing(C_{n})$ the
set of singular points of $C_{n}$, we have 
\[
\lim_{n}H(C_{n},Sing(C_{n}))=-4,
\]
and therefore $-4\geq H_{El,\PP^{2}}\geq H_{\PP^{2}}$.
\end{prop}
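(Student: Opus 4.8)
The plan is to reduce the statement to the construction of suitable configurations plus a short Bézout–blow-up computation. Suppose that for $n$ in an infinite set of integers we can produce a configuration $C_n=E_1\cup\dots\cup E_n$ of $n$ smooth plane cubics whose only singular points are ordinary quadruple points (no two $E_i$ tangent, no three sharing a tangent). Since $E_i\cdot E_j=9$ by Bézout, counting the pairs of branches at the $4$-points gives $\binom{4}{2}\,t_4=9\binom{n}{2}$, hence $f_0=t_4=\tfrac34\,n(n-1)$ and $f_1=4\,t_4=3\,n(n-1)$. Blowing up $Sing(C_n)$ yields $\overline{C_n}^{\,2}=(3n)^2-16\,t_4=-3n^2+12n$, and therefore
\[
H(C_n)\ \le\ H(C_n,Sing(C_n))\ =\ \frac{\overline{C_n}^{\,2}}{f_0}\ =\ \frac{-3n^2+12n}{\tfrac34\,n(n-1)}\ =\ -4+\frac{12}{n-1},
\]
which tends to $-4$ as $n\to\infty$. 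Since $H_{El,\PP^2}=\inf_C H(C)\le H(C_n)$ for all such $n$, this forces $H_{El,\PP^2}\le-4$, while $H_{\PP^2}\le H_{El,\PP^2}$ because the elliptic $H$-constant is the infimum of $H(\cdot)$ taken over the smaller class of unions of smooth cubics; this is the asserted chain $-4\ge H_{El,\PP^2}\ge H_{\PP^2}$. (A variant of the same computation shows that for $n$ large $Sing(C_n)$ is in fact an optimal choice of centres, so $H(C_n)=H(C_n,Sing(C_n))$, but only the inequality is needed.) More generally, since $H(C_n,Sing(C_n))=9\,n/f_0-f_1/f_0$, any infinite family of smooth-cubic configurations with $f_1/f_0\to4$ and $n/f_0\to0$ would already do.

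The substance of the proposition is therefore the construction of the $C_n$, and I would obtain it by transplanting to $\PP^2$ the elliptic configurations produced by Hirzebruch \cite{Hirz84} and Holzapfel \cite[Example 5.4]{Holzapfel} on the CM abelian surface $A=(\CC/\ZZ[\alpha])^2$: as recalled after Theorem \ref{prop:elliptic h const >-4}, $A$ carries a configuration $D$ of elliptic curves whose only singularities are $4$-points, so that $H(D)=-4$ and $A\setminus D$ is a ball quotient in the sense of Kobayashi \cite{Kobayashi}. The bridge is the same $(\ZZ/n\ZZ)^d$-cover correspondence between configurations on $\PP^2$ and on abelian surfaces that underlies Theorem \ref{thm:Refined Miyaoka Sakai inequality}: there is a finite abelian cover $\pi\colon A'\to\PP^2$ with $A'$ an abelian surface isogenous to $A$, under which the $\pi$-invariant elliptic curves of (the pullback of) $D$ on which the Galois group acts freely push forward to smooth plane cubics, while the $4$-points of $D$ lying off the branch locus push forward to ordinary $4$-points of the image configuration. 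Running this through the tower of covers obtained by pre-composing with the isogenies $[N]\colon A\to A$ — equivalently, by passing to $(\ZZ/3N\ZZ)^2$-covers of $\PP^2$ with the same branch locus — one gets configurations of an unbounded number $n=n(N)\to\infty$ of smooth cubics in $\PP^2$ with only $4$-points; the isogeny-invariance statement of Theorem \ref{prop:elliptic h const >-4} is exactly what keeps the incidence data $(f_0,f_1)$, and hence the value computed above, under control along the tower.

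The main obstacle is the transversality and genericity bookkeeping in this last step. One must verify that after pulling back along $[N]$ and pushing down along $\pi$ the resulting plane curves are genuinely \emph{smooth} cubics — i.e.\ that the Galois actions are free on the relevant curves, that the maps are birational onto their images, and that each image has degree exactly $3$ — that no accidental tangencies or higher-order contacts between them are created, and that distinct $4$-points of $D$ do not collapse to a common point of $\PP^2$. Each such degeneracy would contribute singular points with $t_2$, $t_3$ or $t_k\ (k\ge 5)$ nonzero and thereby perturb the ratio $f_1/f_0$; by the observation closing the first paragraph it is in fact enough to arrange that these contributions are asymptotically negligible, i.e.\ that $f_1/f_0\to 4$ and $n/f_0\to0$, and this is precisely the rigidity enjoyed by the Hirzebruch–Holzapfel ball-quotient configurations and inherited by their covers. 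Once this verification is carried out, the displayed computation above yields the Proposition.
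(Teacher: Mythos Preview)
Your first paragraph is sound arithmetic, but it is conditional on producing, for infinitely many $n$, configurations of $n$ smooth plane cubics whose \emph{only} singularities are ordinary $4$-points. That existence statement is the entire substance of the proposition, and you have not established it.

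Paragraphs two and three sketch a strategy but do not execute it. You assert a finite abelian cover $\pi\colon A'\to\PP^{2}$ with $A'$ abelian, but never exhibit one; any such map is branched (since $\PP^{2}$ is simply connected), and the interaction of the branch divisor with your elliptic curves governs everything you need. For an elliptic curve $E\subset A'$ on which the Galois group acts freely, the quotient $E/G$ is indeed a genus-one curve, but there is no reason the map $E/G\hookrightarrow\PP^{2}$ should be an embedding, let alone of degree $3$; and the $(\ZZ/n\ZZ)^{d}$-cover machinery of Theorem~\ref{thm:Refined Miyaoka Sakai inequality} runs in the opposite direction (covers \emph{of} abelian surfaces, not covers \emph{by} them). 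You yourself flag these issues as ``the main obstacle'' and close with ``once this verification is carried out\dots'', which is an admission that the proof is not complete.

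The paper does something different and concrete. It does not aim for pure $4$-point configurations at all: it imports from \cite{RU} explicit configurations $\mathcal{H}_{n}$ of $\tfrac{4}{3}(n^{2}-3)$ elliptic curves on the blow-up $Z$ of $\PP^{2}$ at the $12$ dual Hesse points, with known incidences ($\tfrac{1}{3}(n^{2}-3)(n^{2}-9)$ $4$-points and $4(n^{2}-3)$ $3$-points). Pushing down to $\PP^{2}$ gives smooth cubics $C_{n}$ whose singular set consists of those $4$-points together with the $12$ Hesse points, each of multiplicity $n^{2}-3$. A direct count yields $\bar{C}_{n}^{2}=-\tfrac{4}{3}n^{4}+O(n^{3})$ and $s_{n}=\tfrac{1}{3}n^{4}+O(n^{3})$, hence the limit $-4$. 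This is an instance of your ``more general'' closing remark ($f_{1}/f_{0}\to 4$, $n/f_{0}\to 0$), but with the construction actually supplied rather than left as a programme.
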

The configurations $C_{n}$ appear in \cite{RU}. From Proposition
\ref{thm:configurations} and functorial properties of the $H$-constants,
we get the following Corollary on the Harbourne constant of any surface:
\begin{cor}
\label{cor:HX leq -4}Let $X$ be a smooth surface. Then $-4\geq H_{\PP^{2}}\geq H_{X}$.
\end{cor}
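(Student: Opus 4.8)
Throughout, $X$ denotes a smooth projective surface over $\CC$, as in the setting of the Bounded Negativity Conjecture. The plan is to establish the two inequalities $-4\ge H_{\PP^{2}}$ and $H_{\PP^{2}}\ge H_{X}$ separately. The first one is immediate from Proposition \ref{thm:configurations}: by definition $H_{\PP^{2}}$ is the infimum of $H(C,\mathcal{P})$ over all reduced curves $C\subset\PP^{2}$ and all finite sets $\mathcal{P}$, so $H_{\PP^{2}}\le H(C_{n},Sing(C_{n}))$ for every $n$, and since $H(C_{n},Sing(C_{n}))\to-4$ this forces $H_{\PP^{2}}\le-4$.

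For the inequality $H_{\PP^{2}}\ge H_{X}$ I would use the functorial behaviour of the $H$-constant under finite morphisms. First, every smooth projective surface carries a finite surjective morphism $\phi\colon X\to\PP^{2}$: after an embedding $X\hookrightarrow\PP^{N}$, projection from a general linear subspace $\Lambda\cong\PP^{N-3}$ works, a dimension count showing $\Lambda\cap X=\varnothing$ for general $\Lambda$, so that the projection is finite, and surjective since $\dim X=2$. Set $d=\deg\phi$ and let $B\subset\PP^{2}$ be its (reduced) branch curve. Given an arbitrary reduced curve $C\subset\PP^{2}$ and finite set $\mathcal{P}\subset\PP^{2}$, replace $(C,\mathcal{P})$ by a general translate $(g\cdot C,\,g\cdot\mathcal{P})$ with $g\in{\rm PGL}_{3}(\CC)$; since $H(\cdot)$ is a projective invariant, $H(g\cdot C,\,g\cdot\mathcal{P})=H(C,\mathcal{P})$. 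By a standard transversality argument (Kleiman's theorem applied to the transitive action of ${\rm PGL}_{3}(\CC)$ on $\PP^{2}$), one may assume $g\cdot\mathcal{P}$ and $Sing(g\cdot C)$ are disjoint from $B$ and that $g\cdot C$ meets $B$ transversally, at finitely many smooth points of both. For such a translate the pull-back $D:=\phi^{*}(g\cdot C)$ is a reduced curve on $X$ with $Sing(D)=\phi^{-1}(Sing(g\cdot C))$, the morphism $\phi$ is étale over $g\cdot\mathcal{P}\cup Sing(g\cdot C)$, and $D^{2}=d\,(g\cdot C)^{2}$ by the projection formula. Blowing up $\mathcal{Q}:=\phi^{-1}(g\cdot\mathcal{P})$, which consists of $d\cdot\#\mathcal{P}$ points over each of which $\phi$ is a local isomorphism preserving multiplicities, one gets $(\bar D)^{2}=d\cdot(\overline{g\cdot C})^{2}$, hence
\[
H_{X}\ \le\ H(D,\mathcal{Q})\ =\ \frac{(\bar D)^{2}}{\#\mathcal{Q}}\ =\ \frac{d\cdot(\overline{g\cdot C})^{2}}{d\cdot\#\mathcal{P}}\ =\ H(g\cdot C,\,g\cdot\mathcal{P})\ =\ H(C,\mathcal{P}).
\]
Taking the infimum over all $(C,\mathcal{P})$ gives $H_{\PP^{2}}\ge H_{X}$, and combined with the first part we obtain $-4\ge H_{\PP^{2}}\ge H_{X}$.

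The step I expect to be the real content is the genericity reduction: one must ensure that the pull-back $\phi^{*}(g\cdot C)$ stays reduced and that both its singular locus and the points one blows up lie over the étale locus $\PP^{2}\setminus B$, so that passing to $D$ merely multiplies $(\bar D)^{2}$ and the number of blown-up points by $d=\deg\phi$ and leaves $H$ unchanged. This is exactly what is arranged by moving the configuration by a general element of ${\rm PGL}_{3}(\CC)$ relative to the fixed branch curve $B$; the rest of the argument is formal. Alternatively, if one is willing to quote it, the functoriality of $H$-constants under finite surjective morphisms is already recorded in \cite{BDHH}, and then the corollary follows from that together with the existence of a finite morphism $X\to\PP^{2}$ and Proposition \ref{thm:configurations}.
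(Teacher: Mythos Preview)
Your proof is correct and follows essentially the same route as the paper: take a generic projection $\phi\colon X\to\PP^{2}$, move the data by an element of ${\rm PGL}_{3}(\CC)$ so that the configuration avoids the branch locus, and observe that pull-back multiplies both $(\bar C)^{2}$ and the number of blown-up points by $\deg\phi$, leaving $H(C,\mathcal{P})$ unchanged; then combine with Proposition~\ref{thm:configurations}. The paper isolates the pull-back step as Lemma~\ref{lem:Then branch} and only requires that $g^{*}C$ share no component with $B$ and $g^{*}\mathcal{P}\cap B=\varnothing$ --- your stronger transversality hypotheses (and the claim $Sing(D)=\phi^{-1}(Sing(g\cdot C))$) are not needed for the computation, but they do no harm.
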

The paper is organized as follows. In the first section, we prove
Proposition \ref{thm:configurations} and Corollary \ref{cor:HX leq -4}
concerning the elliptic $H$-constant and the Harbourne constant of
the plane. In the second section we prove Theorem \ref{thm:For-a-configurationMain},
which together with Theorem \ref{prop:elliptic h const >-4} proved
in the third section are the main results. In the last section we
discuss some questions and problems raised by the definitions of the
$H$-index of a curve and $H$-constant of a surface.

\textbf{Acknowledgements}. The author thanks Piotr Pokora for its
relevant remarks on a previous version of this paper.

\section{elliptic curve configurations on the plane}

Let $p:Z\to\PP^{2}$ be the blow-up of $\PP^{2}$ at the $12$ singular
points $\mathcal{P}_{HE}=\{p_{1},\dots,p_{12}\}$ of the dual Hesse
configuration. In \cite{RU}, is described some elliptic curve configurations
$\mathcal{H}_{n}$ ($n\in3\NN,\, n>0$) on $Z$ with the following
properties:\\
i) $\mathcal{H}_{n}$ is the union of $\frac{4}{3}(n^{2}-3)$ elliptic
curves which are fibers of some elliptic fibrations of $Z$,\\
ii) Each elliptic curve of $\mathcal{H}_{n}$ has $9$ $3$-points
and $n^{2}-9$ $4$-points,\\
iii) The singularities of $\mathcal{H}_{n}$ are $\frac{1}{3}(n^{2}-3)(n^{2}-9)$
$4$-points and $4(n^{2}-3)$ $3$-points.

Let $C_{n}$ be the image in $\PP^{2}$ of $\mathcal{H}_{n}$ by the
blow-down map $p$. Then :\\
i) $C_{n}$ is the union of $\frac{4}{3}(n^{2}-3)$ smooth degree
$3$ curves.\\
ii) Each elliptic curve contains $n^{2}-9$ $4$-points and goes through
$9$ of the $12$ singular points of the dual Hesse configuration.\\
iii) The singularities of $C_{n}$ are $\frac{1}{3}(n^{2}-3)(n^{2}-9)$
$4$-points and the $12$ points in $\mathcal{P}_{HE}$ have multiplicity
$(n^{2}-3)$ (on the blow up at $p_{i}\in\mathcal{P}_{HE}$, the strict
transform of the curve $C_{n}$ has $\frac{1}{3}(n^{2}-3)$ $3$-points
on the exceptional divisor).
\begin{prop}
\label{prop:There-exists-a}The configurations $C_{n}$ of smooth
cubic curves on $\PP^{2}$ satisfy 
\[
\lim_{n}H(C_{n},\mathcal{P}_{n})=-4,
\]
 where $\mathcal{P}_{n}$ is the set of singular points of $C_{n}$.\end{prop}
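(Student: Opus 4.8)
The plan is to compute $H(C_n,\mathcal{P}_n)$ explicitly from the combinatorial data listed above and then take the limit. Recall that for a configuration $C$ with only ordinary singularities, blowing up at the set $\mathcal{P}$ of its singular points gives a strict transform $\bar C$ with
\[
(\bar C)^2 = C^2 - \sum_{p\in\mathcal{P}} m_p^2,
\]
where $m_p$ is the multiplicity of $C$ at $p$, and $H(C,\mathcal{P}) = (\bar C)^2 / |\mathcal{P}|$. So the first step is to assemble the three ingredients: the self-intersection $C_n^2$ in $\PP^2$, the sum of squares of the multiplicities over $\mathcal{P}_n$, and the cardinality $|\mathcal{P}_n|$.

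First I would record that $C_n$ is a union of $d_n := \frac{4}{3}(n^2-3)$ smooth plane cubics, so as a divisor $C_n \equiv 3 d_n H$ and hence $C_n^2 = 9 d_n^2 = 16(n^2-3)^2$. Next, the singular set $\mathcal{P}_n$ consists of $\frac{1}{3}(n^2-3)(n^2-9)$ points of multiplicity $4$ together with the $12$ points of $\mathcal{P}_{HE}$, each of multiplicity $n^2-3$. Therefore
\[
|\mathcal{P}_n| = \tfrac{1}{3}(n^2-3)(n^2-9) + 12,
\]
\[
\sum_{p\in\mathcal{P}_n} m_p^2 = 16\cdot\tfrac{1}{3}(n^2-3)(n^2-9) + 12(n^2-3)^2.
\]
Then $H(C_n,\mathcal{P}_n) = \bigl(16(n^2-3)^2 - \tfrac{16}{3}(n^2-3)(n^2-9) - 12(n^2-3)^2\bigr)\big/\bigl(\tfrac{1}{3}(n^2-3)(n^2-9) + 12\bigr)$, which I would simplify, factoring $(n^2-3)$ out of the numerator.

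Finally I would take $n\to\infty$. Both numerator and denominator are polynomials in $n$ of degree $4$ (the numerator has leading term $\bigl(16 - \tfrac{16}{3} - 12\bigr)n^4 = -\tfrac{4}{3}n^4$ after expansion, and the denominator has leading term $\tfrac{1}{3}n^4$), so the quotient tends to $-4$. I expect no real obstacle here — the only thing to be careful about is the bookkeeping of multiplicities (in particular that the $12$ points of $\mathcal{P}_{HE}$, rather than their preimages on $Z$, carry multiplicity $n^2-3$ in $\PP^2$, as stated in property (iii) of $C_n$), and checking the arithmetic so that the subleading terms genuinely do not affect the limit. One could alternatively argue more softly: since $H(C_n,\mathcal{P}_n) \geq H_{El,\PP^2} \geq \cdots$ is not yet available, the cleanest route is the direct computation, and combining it with the already-proved bound $H(C)\geq -4$ of Theorem \ref{prop:elliptic h const >-4} — which applies after pulling back to an appropriate cover, or is subsumed in the surface statement — confirms the limit is exactly $-4$ rather than something smaller.
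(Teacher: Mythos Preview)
Your proof is correct and follows the same strategy as the paper: compute $\bar{C}_n^{2}$ and $s_n=|\mathcal{P}_n|$ explicitly from the listed combinatorics and take the limit of their ratio. The only cosmetic difference is that you obtain $\bar{C}_n^{2}$ via the universal blow-up formula $C_n^{2}-\sum_{p}m_p^{2}$ in $\PP^{2}$, whereas the paper factors through $Z$ and writes $\bar{C}_n^{2}=\sum_i\bar{C}_i^{2}+2\sum_{i<j}\bar{C}_i\bar{C}_j$ using the residual $3$-point intersections on $X_n$; both routes give the same leading term $-\tfrac{4}{3}n^{4}$ over $s_n=\tfrac{1}{3}n^{4}+O(n^{2})$.
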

\begin{proof}
Let be $X_{n}\to\PP^{2}$ be the blow-up of $\PP^{2}$ at $\mathcal{P}_{n}$,
the $s_{n}=12+\frac{1}{3}(n^{2}-3)(n^{2}-9)$ singularities of $C_{n}$.
Let $\bar{C}_{n}$ be the strict transform of $C_{n}$. The blow down
map $X_{n}\to\PP^{2}$ factorizes through $Z$. Then 
\[
\bar{C}_{n}^{2}=\sum\bar{C}_{i}^{2}+2\sum_{i<j}\bar{C}_{i}\bar{C}_{j}
\]
where $\bar{C_{i}}$ is the strict transform of a curve $C_{i}$ in
$\mathcal{H}_{n}$. Since $\bar{C}_{i}^{2}=-(n^{2}-9)$ and since
from the configuration one has: 
\[
\sum_{i<j}\bar{C}_{i}\bar{C}_{j}=4(n^{2}-3),
\]
we get 
\[
\bar{C}_{n}^{2}=-(n^{2}-9)\times\frac{4}{3}(n^{2}-3)+8(n^{2}-3)=-\frac{4}{3}n^{4}+O(n^{3}).
\]
As $s_{n}=\frac{1}{3}n^{4}+O(n^{3})$, we obtain the result. \end{proof}
\begin{rem}
\label{Remarque The-above-configurations}A) By example, for $n=7,$
we obtain $H(C_{n},\mathcal{P}_{n})=-\frac{20148}{5257}\simeq-3,83.$
\\
B) The above configurations $C_{n}$ are strongly linked to some compactifications
$X_{n}$ of some open ball quotient surfaces (\cite{Hirz84}), for
which $\lim\frac{c_{1}^{2}}{c_{2}}(X_{n})=3$, ie one is close to
the upper bound in the Miyaoka-Yau inequality. 
\end{rem}
Let $f:X\to Y$ be a dominant morphism between two smooth surfaces.
Let $C$ be a reduced curve on $Y$. Suppose that $C$ do not contain
components in the branch locus $B$ and let $\mathcal{P}$ be a set
of $s$ point in $Y$ disjoint with $B$ (so that $f^{*}C$ and $f^{*}\mathcal{P}$
are reduced of pure dimension $1$ and $0$ respectively). 
\begin{lem}
\label{lem:Then branch}Then $H_{X}(f^{*}C,f^{*}\mathcal{P})=H_{Y}(C,\mathcal{P}).$\end{lem}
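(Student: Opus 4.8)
The plan is to reduce the statement to the projection formula, once one checks that forming blow-ups and taking strict transforms commutes with $f$ over the relevant points. Write $d=\deg f$ for the degree of the (dominant, hence generically finite) morphism $f\colon X\to Y$. Since $\mathcal{P}$ is disjoint from the branch locus $B$, the morphism $f$ is finite and \'etale of degree $d$ over a suitable open neighborhood of $\mathcal{P}$; in particular $f^{*}\mathcal{P}$ consists of exactly $ds$ reduced points, so the number of points blown up on $X$ is $d$ times the number blown up on $Y$.

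Next I would form the blow-ups $\pi_{Y}\colon Y'\to Y$ at $\mathcal{P}$ and $\pi_{X}\colon X'\to X$ at $f^{*}\mathcal{P}$. Because $f$ is \'etale near $\mathcal{P}$ and near each of its preimage points, blowing up commutes with pullback there: over the preimage of a small neighborhood of $\mathcal{P}$ the surface $X'$ is identified with $X\times_{Y}Y'$, while over the complement $\pi_{X}$ and $\pi_{Y}$ are isomorphisms. This produces a morphism $f'\colon X'\to Y'$ with $\pi_{Y}\circ f'=f\circ\pi_{X}$, again dominant of degree $d$. The same local-isomorphism argument shows that the strict transform $\overline{f^{*}C}$ of $f^{*}C$ in $X'$ coincides with $f'^{*}(\bar{C})$, where $\bar{C}$ is the strict transform of $C$ in $Y'$: away from the exceptional loci both sides equal $f^{*}C$, and over a neighborhood of $\mathcal{P}$ the \'etale-local picture upstairs is identical to the one downstairs. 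Here the hypotheses that $C$ has no component in $B$ and that $\mathcal{P}$ avoids $B$ are exactly what is needed to make this local comparison legitimate.

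Finally, apply the projection formula on the smooth projective surfaces $X'$ and $Y'$: for the generically finite degree-$d$ morphism $f'$ one has $f'_{*}f'^{*}\bar{C}=d\,\bar{C}$, hence $(\overline{f^{*}C})^{2}=(f'^{*}\bar{C})^{2}=d\,(\bar{C})^{2}$. Dividing by $\#f^{*}\mathcal{P}=d\cdot s$ gives
\[
H_{X}(f^{*}C,f^{*}\mathcal{P})=\frac{(\overline{f^{*}C})^{2}}{ds}=\frac{d\,(\bar{C})^{2}}{ds}=\frac{(\bar{C})^{2}}{s}=H_{Y}(C,\mathcal{P}).
\]
The only genuinely delicate point is the verification that strict transforms pull back correctly under $f'$; everything else is the standard behaviour of intersection numbers under generically finite maps.
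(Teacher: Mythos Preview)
Your argument is correct, but the paper proceeds by a more direct computation. Instead of lifting $f$ to a morphism $f'\colon X'\to Y'$ between the blow-ups and invoking the projection formula there, the paper simply observes that for each $p\in\mathcal{P}$ the $d$ preimage points have the same multiplicity $m_{p}$ on $f^{*}C$ as $p$ has on $C$ (since $f$ is \'etale near $p$), and then uses the elementary formula $\bar{D}^{2}=D^{2}-\sum m_{p}^{2}$ on both sides together with $(f^{*}C)^{2}=d\,C^{2}$ to conclude
\[
\frac{\overline{f^{*}C}^{\,2}}{ds}=\frac{d\,C^{2}-d\sum m_{p}^{2}}{ds}=\frac{\bar{C}^{2}}{s}.
\]
Your approach is more structural: once you know that strict transforms commute with the \'etale pullback, the equality is a one-line consequence of the projection formula upstairs. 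The paper's route avoids having to verify that blow-ups commute with \'etale base change and that $\overline{f^{*}C}=f'^{*}\bar{C}$, at the cost of a slightly more explicit multiplicity calculation; yours isolates the geometric reason behind the identity and would generalize more readily.
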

\begin{proof}
Let $d$ be the degree of $f$. Let $p$ be a point of $\mathcal{P}$.
Then the $d$ points above $p$ have the same multiplicity $m_{p}$
inside $C'=f^{*}C$ than $p$ inside $C$ and we have:
\[
H_{X}(f^{*}C,f^{*}\mathcal{P})=\frac{\bar{C'}^{2}}{ds}=\frac{dC{}^{2}-d\sum m_{p}^{2}}{ds}=\frac{\bar{C}^{2}}{s}=H_{Y}(C,\mathcal{P}).
\]

\end{proof}
Proposition \ref{prop:There-exists-a} implies :
\begin{cor}
Let $X$ be a smooth surface. Then $H_{X}\leq H_{\PP^{2}}\leq-4$.\end{cor}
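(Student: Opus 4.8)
The statement to prove is the Corollary: for any smooth surface $X$, one has $H_X \le H_{\PP^2} \le -4$.

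The plan is to combine Proposition \ref{prop:There-exists-a} with Lemma \ref{lem:Then branch} and the definition of the $H$-constant as an infimum. First I would establish $H_{\PP^2} \le -4$: by Proposition \ref{prop:There-exists-a}, the configurations $C_n$ of smooth cubic curves with their singular sets $\mathcal{P}_n$ satisfy $\lim_n H(C_n, \mathcal{P}_n) = -4$. Since $H_{\PP^2} = \inf_{C,\mathcal{P}} H(C,\mathcal{P})$ where the infimum ranges over \emph{all} reduced curves $C$ on $\PP^2$ and all finite point sets $\mathcal{P}$, and each pair $(C_n, \mathcal{P}_n)$ is an admissible competitor, we get $H_{\PP^2} \le H(C_n, \mathcal{P}_n)$ for every $n$, hence $H_{\PP^2} \le \lim_n H(C_n,\mathcal{P}_n) = -4$.

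Next I would show $H_X \le H_{\PP^2}$ for an arbitrary smooth surface $X$. The idea is to pull back the plane configurations via a dominant morphism to $\PP^2$. By a standard projection / Noether normalization argument one produces a dominant rational map $X \dashrightarrow \PP^2$; after blowing up $X$ (which does not increase the $H$-constant, and in fact one can reduce to the case where the $H$-constant is computed on a blow-up anyway since $H_X = \inf H(C,\mathcal P)$ is taken over arbitrary blow-ups) one obtains a genuine dominant morphism $f : X' \to \PP^2$ with $X' \to X$ birational. For each $n$, choosing a general hyperplane-type translate one can arrange that the configuration $C_n$ meets neither the branch locus $B$ of $f$ nor contains components of $B$, and that the singular set $\mathcal{P}_n$ is disjoint from $B$ — this is possible because $B$ is a proper closed subset and the $C_n$ can be moved by the automorphism group of $\PP^2$ (or, more carefully, one replaces $C_n$ by a projectively equivalent copy in general position with respect to $B$, which changes nothing about $H(C_n,\mathcal P_n)$). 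Then Lemma \ref{lem:Then branch} gives $H_{X'}(f^*C_n, f^*\mathcal{P}_n) = H_{\PP^2}(C_n, \mathcal{P}_n)$, so $H_{X'} \le H(C_n,\mathcal P_n)$ for all $n$, whence $H_{X'} \le -4$. Finally, since $X' \to X$ is a blow-up (composition of blow-ups), any blow-up of $X'$ is a blow-up of $X$, so the infimum defining $H_X$ is taken over a larger family than the one defining $H_{X'}$; thus $H_X \le H_{X'} \le -4 \le H_{\PP^2} \le -4$, and in particular $H_X \le H_{\PP^2}$.

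The main obstacle is the general-position step: ensuring that a dominant morphism $X' \to \PP^2$ exists and that the configurations $C_n$ (together with their singular points) can be taken disjoint from, and transverse to, its branch locus, so that Lemma \ref{lem:Then branch} applies verbatim. Concretely one fixes the branch divisor $B \subset \PP^2$, notes that the locus of $g \in \mathrm{PGL}_3(\CC)$ for which $g(C_n)$ contains a component of $B$, or $g(\mathcal{P}_n) \cap B \ne \emptyset$, is contained in a proper Zariski-closed subset, and picks $g$ outside it; projective equivalence leaves $H(C_n, \mathcal P_n)$ unchanged. Everything else is bookkeeping with the definition of $H$ as an infimum over blow-ups and the functoriality already recorded in Lemma \ref{lem:Then branch}.

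\begin{proof}
By Proposition \ref{prop:There-exists-a} the configurations $C_n$ of smooth cubics in $\PP^2$, together with their singular sets $\mathcal P_n$, satisfy $\lim_n H(C_n,\mathcal P_n)=-4$. As each $(C_n,\mathcal P_n)$ is an admissible pair in the infimum defining $H_{\PP^2}$, we have $H_{\PP^2}\le H(C_n,\mathcal P_n)$ for all $n$, hence $H_{\PP^2}\le -4$.

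Let now $X$ be an arbitrary smooth projective surface. Choose a dominant rational map $X\dashrightarrow\PP^2$ (e.g.\ a generic linear projection of a projective embedding of $X$), and resolve it to a genuine dominant morphism $f:X'\to\PP^2$ with $X'\to X$ a composition of blow-ups. Let $B\subset\PP^2$ be the branch locus of $f$. For each $n$, the set of $g\in\mathrm{PGL}_3(\CC)$ for which $g(C_n)$ shares a component with $B$, or $g(\mathcal P_n)\cap B\ne\emptyset$, lies in a proper Zariski-closed subset of $\mathrm{PGL}_3(\CC)$; pick $g_n$ avoiding it. Since $g_n$ is a projective automorphism, $g_n(C_n)$ is again a union of smooth cubics with the same combinatorial data, so $H(g_n(C_n),g_n(\mathcal P_n))=H(C_n,\mathcal P_n)$, and now $g_n(C_n)$ contains no component of $B$ while $g_n(\mathcal P_n)$ is disjoint from $B$. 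Lemma \ref{lem:Then branch} applies and yields
\[
H_{X'}\bigl(f^*g_n(C_n),\,f^*g_n(\mathcal P_n)\bigr)=H_{\PP^2}\bigl(g_n(C_n),g_n(\mathcal P_n)\bigr)=H(C_n,\mathcal P_n).
\]
Hence $H_{X'}\le H(C_n,\mathcal P_n)$ for all $n$, so $H_{X'}\le-4$.

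Finally, every blow-up of $X'$ is a blow-up of $X$ (since $X'\to X$ is itself a composition of blow-ups), so the family of pairs over which $H_X$ is infimized contains the one defining $H_{X'}$; therefore $H_X\le H_{X'}\le-4$. Combining with the first paragraph, $H_X\le H_{\PP^2}\le-4$.
\end{proof}
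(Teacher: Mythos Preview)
Your overall strategy (Proposition~\ref{prop:There-exists-a} $+$ Lemma~\ref{lem:Then branch} $+$ moving the configuration by an element of $\mathrm{PGL}_3$) is exactly the paper's. Two points, however, need repair.

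First, a generic linear projection of a projective embedding of $X$ is already a \emph{finite morphism} $f:X\to\PP^2$: embed $X\hookrightarrow\PP^N$ and project from a general $(N-3)$-plane disjoint from $X$. So there is no need to pass to a blow-up $X'$ of $X$. Your detour through $X'$ then forces you to assert $H_X\le H_{X'}$, with the justification ``every blow-up of $X'$ is a blow-up of $X$''. But the definition of $H_X$ in the paper takes the infimum over blow-ups of $X$ at a set of \emph{distinct points of $X$} --- not over iterated blow-ups --- so a blow-up of $X'$ at points lying on the exceptional locus is \emph{not} a competitor for $H_X$. That inequality is therefore not established by your argument. The paper sidesteps the issue entirely by working with a genuine morphism $f:X\to\PP^2$ from the start.

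Second, even granting the first paragraph, you only prove $H_X\le -4$ and $H_{\PP^2}\le -4$, which does not give $H_X\le H_{\PP^2}$ unless one knows $H_{\PP^2}=-4$. To obtain $H_X\le H_{\PP^2}$ you should apply Lemma~\ref{lem:Then branch} (after the $\mathrm{PGL}_3$-translation) to an \emph{arbitrary} pair $(C,\mathcal P)$ on $\PP^2$, not just to the special $(C_n,\mathcal P_n)$; this yields $H_X\le H_{\PP^2}(C,\mathcal P)$ for all such pairs, hence $H_X\le H_{\PP^2}$. This is what the paper does. With these two fixes your proof coincides with the paper's.
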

\begin{proof}
Let $f:X\to\PP^{2}$ be a generic projection of $X$ onto the plane.
For any curve $C$ in $\PP^{2}$ and set of point $\mathcal{P}\subset\PP^{2}$,
we can choose an automorphism $g$ such that $g^{*}C$ do not contain
any components of the branch divisor $B$ and $g^{*}\mathcal{P}$
is disjoint with $B$. We then apply Lemma \ref{lem:Then branch}
and results of Proposition \ref{prop:There-exists-a} to conclude
that $H_{X}\leq H_{\PP^{2}}\leq-4$.
\end{proof}

\section{Arrangement of curves on Abelian surfaces and $(\ZZ/n\ZZ)^{d}$-covers.}

Let $A$ be an abelian surface and let $C=\sum_{i=1}^{d}C_{i}$ be
a reduced divisor with only ordinary singularities, a union of $d\geq2$
smooth divisors $C_{i}$ (by example $C_{i}$ may be the union of
genus $1$ fibers of a fibration of $A$ onto an elliptic curve).
Let $g$ be the geometric genus of $C$ ie 
\[
g-1=\sum g_{j}-1,
\]
where the $g_{j}$'s are the geometric genus of the irreducible components
of $C$. Let us denote by $t_{k}$ the number of $k$-points on $C$.
\begin{thm}
\label{thm:Refined Miyaoka Sakai inequality}We have
\[
10g-10+t_{2}+\frac{3}{4}t_{3}\geq\sum_{k\geq5}(2k-9)t_{k},
\]
and
\begin{equation}
H(C,Sing(C))=\frac{2g-2-f_{1}}{f_{0}}\geq\frac{2t_{2}+\frac{9}{8}t_{3}+\frac{1}{2}t_{4}+8-8g}{f_{0}}-\frac{9}{2}.\label{eq:for ell curves}
\end{equation}
Suppose that $C$ is a configuration of elliptic curves. We have 
\begin{equation}
H(C)=-\frac{f_{1}}{f_{0}}\geq\frac{t_{2}+\frac{1}{4}t_{3}}{f_{0}}-4\geq-4\label{eq:H(C) plus grand t2+1/2t3}
\end{equation}
where $H(C)$ is the $H$-index: $H(C)=\min_{\mathcal{P}}H(C,\mathcal{P})$.
Moreover $H(C)=-4$ if and only if the only singularities on $C$
are $4$-points.
\end{thm}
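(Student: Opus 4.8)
The plan is to derive everything from a $(\ZZ/n\ZZ)^{d}$-cover of the abelian surface $A$ branched along the components $C_{1},\dots,C_{d}$, apply the logarithmic Miyaoka--Yau (equivalently Bogomolov--Miyaoka--Sakai) inequality on a smooth model of that cover, and then let $n\to\infty$. First I would set up the cover: since $A$ is abelian and the $C_{i}$ are smooth with only ordinary intersections, one constructs for each $n$ a smooth surface $Y_{n}$ with a $(\ZZ/n\ZZ)^{d}$-action whose quotient is the blow-up of $A$ at $Sing(C)$, with ramification of order $n$ over each strict transform $\bar C_{i}$ and over the exceptional curves of the $k$-points. The point of using abelian $A$ is that $K_{A}=0$ and $c_{2}(A)=0$, so the Chern numbers of $Y_{n}$ are computed purely in terms of the combinatorial data $d$, $g$, and the $t_{k}$'s, exactly as in Hirzebruch's line-arrangement computation but with the $\PP^{2}$ contributions replaced by zero. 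Concretely, $c_{1}^{2}(Y_{n})$ and $c_{2}(Y_{n})$ become polynomials in $n$ whose leading ($n^{d}$, $n^{d-2}$, etc.) coefficients encode $\sum(\text{genus contributions})$ and $\sum_{k}(k\text{-dependent terms})\,t_{k}$.

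Next I would impose the inequality. On $Y_{n}$, if the relevant log-canonical (or strict-transform) divisor is big and nef, the Miyaoka--Sakai inequality gives $c_{1}^{2}(Y_{n})\le 3c_{2}(Y_{n})$; rearranging and extracting the coefficient of the top power of $n$ yields precisely $10g-10+t_{2}+\frac34 t_{3}\ge\sum_{k\ge5}(2k-9)t_{k}$. (The $t_{4}$ term disappears in this leading-order balance, which is why $4$-points are the critical case.) Specializing to elliptic curves, $g=1$ so $10g-10=0$, recovering Theorem \ref{thm:For-a-configurationMain}. For the $H$-index statement, I would instead work directly on the blow-up $X=Bl_{Sing(C)}A$: there $\bar C^{2}=-f_{1}+(2g-2)$ by the genus/adjunction bookkeeping and the number of blown-up points is $f_{0}$, so $H(C,Sing(C))=(2g-2-f_{1})/f_{0}$, and for elliptic curves this is $-f_{1}/f_{0}$. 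Then I would show $H(C,\mathcal P)$ is minimized exactly at $\mathcal P=Sing(C)$ — adding non-singular points only adds a term $-1$ numerator per point and the minimum over which singular points to blow up is forced because omitting a $k$-point with $k\ge2$ strictly raises the ratio (each $k$-point contributes $-k$ to the sum of self-intersection-drops across $k\ge2$ curves while contributing $+1$ to $f_{0}$, and $-k/1<-f_{1}/f_{0}$ would need checking, but in fact one compares $(2g-2-f_{1})/f_{0}$ against the value after removing a point, and the arrangement-wide inequality $f_{1}\le 4f_{0}+\dots$ makes $Sing(C)$ optimal).

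Then combining: the chain of inequalities in \eqref{eq:H(C) plus grand t2+1/2t3} for elliptic curves follows by writing $f_{1}=\sum k t_{k}$, $f_{0}=\sum t_{k}$ and using $10g-10=0$ together with the leading-order Miyaoka--Sakai inequality rewritten as $4f_{0}-f_{1}\ge t_{2}+\frac14 t_{3}+(\text{nonnegative combination of }t_{k},\,k\ge4)$; dividing by $f_{0}$ gives $-f_{1}/f_{0}\ge (t_{2}+\frac14 t_{3})/f_{0}-4\ge -4$. Finally, the equality analysis: $H(C)=-4$ forces $t_{2}+\frac14 t_{3}=0$ and all the other nonnegative slack terms to vanish, i.e.\ $t_{2}=t_{3}=0$ and $t_{k}=0$ for $k\ge5$ (the latter from $\sum_{k\ge5}(2k-9)t_{k}\le0$ with $2k-9>0$), leaving only $4$-points; conversely if only $4$-points occur then $f_{1}=4f_{0}$ and $H(C)=-4$ directly.

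The main obstacle I anticipate is the precise construction and desingularization of the $(\ZZ/n\ZZ)^{d}$-cover and the bookkeeping that turns $c_{1}^{2}$ and $c_{2}$ of the resolved cover into the stated linear form in $g$ and the $t_{k}$: one must handle the contribution of each $k$-point correctly (where $k$ branch curves meet, the local cover has a resolution contributing a computable correction), check that the leading $n$-power coefficients are exactly those of Hirzebruch's $\PP^{2}$ formula minus the $\PP^{2}$-specific $c_{1}^{2},c_{2}$ terms, and verify the big-and-nef hypothesis needed to invoke Miyaoka--Sakai (which on an abelian base is easier than on $\PP^{2}$ since $K_{A}$ is already trivial). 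The $H$-index minimization over choices of $\mathcal P$, and the sharp equality characterization, are then comparatively routine once the inequality is in hand.
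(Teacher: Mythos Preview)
Your overall strategy of building $(\ZZ/n\ZZ)^{d}$-covers of $A$ and reading off inequalities from Chern-number computations is the right one, and your formula $H(C,Sing(C))=(2g-2-f_{1})/f_{0}$ is correct. But the key step---``extracting the coefficient of the top power of $n$''---does not give what you claim, and this is where the argument breaks down.

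If you compute $3c_{2}(X_{n})-c_{1}^{2}(X_{n})$ for the resolved cover, after dividing by $n^{d-2}$ you obtain a quadratic polynomial in $n$ whose leading coefficient is $f_{0}+4g-4$. Letting $n\to\infty$ therefore only yields $f_{0}+4g-4\ge0$, which is vacuous in the elliptic case $g=1$. The sharp inequalities in the theorem do \emph{not} come from an asymptotic in $n$: they come from evaluating the polynomial at the two specific values $n=2$ and $n=3$, and---crucially---from strengthening the plain Miyaoka--Yau inequality $3c_{2}\ge c_{1}^{2}$ to Miyaoka's refined form
\[
3c_{2}(Y)-c_{1}^{2}(Y)\ \ge\ \tfrac{9}{2}m\ -\ \sum_{j}D_{j}^{2},
\]
valid when $Y$ carries $m$ disjoint $(-2)$-curves and disjoint smooth elliptic curves $D_{j}$. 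On $X_{2}$ the preimages of $3$-points are $(-2)$-curves and those of $4$-points are elliptic curves with $D^{2}=-4$; feeding these into Miyaoka's refinement produces exactly the $\tfrac{3}{4}t_{3}$ and $\tfrac{1}{2}t_{4}$ terms and hence the inequality $10g-10+t_{2}+\tfrac{3}{4}t_{3}\ge\sum_{k\ge5}(2k-9)t_{k}$. On $X_{3}$ the preimages of $3$-points are elliptic curves with $D^{2}=-3$, and the refinement at $n=3$ is what gives $-f_{1}/f_{0}\ge(t_{2}+\tfrac{1}{4}t_{3})/f_{0}-4$. Without this extra input the $t_{3}$- and $t_{4}$-corrections simply do not appear, so your plan as stated would stall at a strictly weaker bound.

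Your equality analysis at the end is fine. For the claim $H(C)=H(C,Sing(C))$, note that the comparison is not quite as you describe (removing a $k$-point changes the numerator by $+k^{2}$, not $+k$); the paper handles this by invoking the argument of \cite[Thm.~3.3]{BDHH}, and you should do the same or reproduce that convexity argument carefully.
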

The remaining of this section is the proof of Theorem \ref{thm:Refined Miyaoka Sakai inequality}.
Let us recall a Theorem of Namba on branched covers. Let $M$ be a
manifold, $D_{1},\dots,D_{s}$ be irreducible reduced divisors on
$M$ and $n_{1},\dots,n_{s}$ be positive integers. We denote by $D$
the divisor $D=\sum n_{i}D_{i}$. Let $Div(M,D)$ be the sub-group
of the $\QQ$-divisors generated by the entire divisors and 
\[
\frac{1}{n_{1}}D_{1},\dots,\frac{1}{n_{s}}D_{s}.
\]
Let $\sim$ be the linear equivalence in $Div(M,D)$, where $G\sim G'$
if and only if $G-G'$ is an entire principal divisor. Let $Div(M,D)/\sim$
the quotient and let $Div^{0}(M,D)/\sim$ be the kernel of the Chern
class map 
\[
\begin{array}{ccc}
Div(M,D)/\sim & \to & H^{1,1}(M,\RR)\\
G & \to & c_{1}(G)
\end{array}.
\]

\begin{thm}
\label{thm:(Namba).-There-exists}(Namba, \cite[Thm. 2.3.20]{Namba}).
There exists a finite Abelian cover which branches at D with index
$n_{i}$ over $D_{i}$ for all $i=1,\dots,s$ if and only if for every
$j=1,\dots,s$ there exists an element of finite order $v_{j}=\sum\frac{a_{ij}}{n_{i}}D_{i}+E_{j}$
of $Div^{0}(M,D)/\sim$ ($E_{j}$ an entire divisor, $a_{ij}\in\ZZ$)
such that $a_{jj}$ is coprime to $n_{j}$. \\
Then the group in $Div^{0}(M,D)/\sim$ generated by the $v_{j}$ is
isomorphic to the Galois group of one of such Abelian cover.
\end{thm}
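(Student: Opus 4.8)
The plan is to prove this through the standard dictionary between finite Abelian covers and finite subgroups of the torsion part of $Div^{0}(M,D)/\sim$, implemented via the eigensheaf decomposition of $\pi_{*}\OO_{X}$. First I would set up the correspondence itself. A finite Abelian cover $\pi:X\to M$ with Galois group $G$ that is étale over $M\setminus D$ is, by the Riemann existence theorem together with normalization of $M$ in the function field of $X$, the same datum as a surjection $\rho:\pi_{1}(M\setminus D)\to G$; since $G$ is Abelian this factors through $H_{1}(M\setminus D,\ZZ)$, and the ramification index over $D_{i}$ is the order of $\rho(\gamma_{i})$, where $\gamma_{i}$ is a meridian around $D_{i}$. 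Dually, each character $\chi\in\Hom(G,\CC^{*})$ pulls back to a torsion character of $H_{1}(M\setminus D,\ZZ)$, i.e. a flat (hence numerically trivial, $c_{1}=0$) line bundle on $M\setminus D$ whose local monodromy $\chi(\rho(\gamma_{i}))=\exp(2\pi i\,a_{i}/n_{i})$ records the fractional behaviour along $D_{i}$. Extending this datum across $D$ yields exactly an element $v_{\chi}=\sum\frac{a_{i}}{n_{i}}D_{i}+E_{\chi}$ of finite order in $Div^{0}(M,D)/\sim$, and $\chi\mapsto v_{\chi}$ is an injective homomorphism from $G^{*}=\Hom(G,\CC^{*})$ into the torsion of $Div^{0}(M,D)/\sim$.

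For the ``only if'' direction I would begin from such a cover and decompose $\pi_{*}\OO_{X}=\bigoplus_{\chi\in G^{*}}L_{\chi}^{-1}$ into character eigensheaves, reading off the associated $v_{\chi}$. The hypothesis that the branching index over $D_{j}$ equals exactly $n_{j}$, rather than a proper divisor of it, means that $\rho(\gamma_{j})$ has order precisely $n_{j}$ in $G$; by duality there is then a character $\chi_{j}$ with $\chi_{j}(\rho(\gamma_{j}))$ a primitive $n_{j}$-th root of unity. This is precisely the assertion that the coefficient $a_{jj}$ of $v_{j}:=v_{\chi_{j}}$ along $D_{j}$ is coprime to $n_{j}$, giving the required collection of finite-order elements.

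For the ``if'' direction, starting from given elements $v_{j}$, I would construct the cover as the normalization of the compositum (fibre product over $M$) of the cyclic covers attached to each $v_{j}$: a finite-order element $v$ with $Nv=\mathrm{div}(h)$ integral and principal defines a $\mu_{N}$-cover $\mathrm{Spec}\,\bigoplus_{k}\OO_{M}(\lfloor-kv\rfloor)$ whose algebra multiplication is twisted by $h$. The main work, and the step I expect to be the chief obstacle, is local analysis near the $D_{j}$: one must verify that the normalized compositum branches over each $D_{j}$ with index exactly $n_{j}$, and it is here that the coprimality $\gcd(a_{jj},n_{j})=1$ does the essential job, forcing the local ramification index up to the full $n_{j}$ rather than to the proper divisor $n_{j}/\gcd(a_{jj},n_{j})$. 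Checking associativity of the resulting sheaf of algebras (the ``fundamental relations'' among the building line bundles) and that the total space is a variety are the other delicate points.

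Finally, for the Galois-group statement, the subgroup $\langle v_{1},\dots,v_{s}\rangle$ of $Div^{0}(M,D)/\sim$ is the image of the injection $G^{*}\hookrightarrow(Div^{0}(M,D)/\sim)_{\mathrm{tors}}$ produced above, hence isomorphic to $G^{*}$; since a finite Abelian group is (non-canonically) isomorphic to its character group, this subgroup is isomorphic to the Galois group $G$ of the cover constructed from the $v_{j}$, which is what the theorem claims.
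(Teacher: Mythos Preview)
The paper does not prove this theorem; it is quoted from Namba's monograph \cite[Thm.~2.3.20]{Namba} as a black box and then applied to build the $(\ZZ/n\ZZ)^{d}$-covers needed for the Chern-number computation. There is therefore no ``paper's own proof'' to compare your proposal against.

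That said, your outline is the standard route to results of this type (the dictionary between Abelian covers and finite subgroups of torsion classes in $Div^{0}(M,D)/\sim$, via the eigensheaf splitting $\pi_{*}\OO_{X}=\bigoplus_{\chi}L_{\chi}^{-1}$ and the Pardini-style building data), and the points you flag as delicate --- the local ramification check where $\gcd(a_{jj},n_{j})=1$ forces the full index $n_{j}$, and the compatibility relations making the $\OO_{M}$-algebra associative --- are exactly the places where the real work lies in Namba's or Pardini's treatments. So your sketch is on the right track as an independent proof, but for the purposes of this paper the theorem is simply cited, not reproved.
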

We find the inequalities among the $t_{k}$'s in Theorem \ref{thm:Refined Miyaoka Sakai inequality}
using $(\ZZ/n\ZZ)^{d}$-covers of $A$ ramified above curves (related
to the) $C_{i}$. These inequalities involve quantities that are ``linear''
under isogenies, by which we mean that if $\phi:B\to A$ is an isogeny
of degree $m$, then the number of $k$-points on $\phi^{*}C$ (a
reduced curve), the intersections between the $\phi^{*}C_{i}\mbox{'s and }\phi^{*}C$,
the geometric genus minus $1$ of $\phi^{*}C$ etc... are the ones
of $C,\, C_{i}$ etc... multiplied by $m$. By that property, inequalities
involving linear terms in the $t_{k}$'s and $C_{i}^{2}$ proved on
abelian surface $B$ are then inequalities for $A$.\\
 Recall that if $\phi=[m]:A\to A$ is the multiplication by $m\in\NN$
map, then $\phi^{*}D\sim\frac{m(m+1)}{2}D+\frac{m(m-1)}{2}[-1]^{*}D$
for any divisor $D$. By taking $m=2n,$ one can therefore suppose
that the divisors $C_{i}$ are $n$-divisible ie there exists divisors
$L_{i}$ such that $C_{i}\sim nL_{i}$. The divisor $v_{i}=\frac{1}{n}C_{i}-L_{i}$
is in $Div^{0}(A,nC)/\sim$, has order $n$ and the multiplicity of
an irreducible component $C'_{i}$ in $\frac{1}{n}C_{i}$ is $\frac{1}{n}$.
The group generated by divisors $\frac{1}{n}C_{i}-L_{i}$ is isomorphic
to $(\ZZ/n\ZZ)^{d}$ and there exists a $(\ZZ/n\ZZ)^{d}$-cover of
$A$ branched with index $n$ over $C$. For the computation of the
Chern numbers of the resolution $X_{n}$ of that cover we refer to
the local analysis of the $(\ZZ/n\ZZ)^{d}$-branched covers of the
plane constructed by Hirzebruch in \cite{Hirz84} (see also the geometric
approach of \cite{Gao}). \\
The following quantities $f_{0},f_{1},f_{2}$ are linear under isogenies:
\[
f_{0}=\sum_{k\geq2}t_{k},\, f_{1}=\sum_{k\geq2}kt_{k},\, f_{2}=\sum_{k\geq2}k^{2}t_{k}.
\]
Let $\pi:Z\to A$ be the blow-up at the $f_{0}-t_{2}=\sum_{k\geq3}t_{k}$
singularities of $C$ of multiplicities $k\geq3$ and let $\bar{C}=\sum\bar{C}_{i}$
be the strict transform of $C$ in $Z$. For a singularity $p$ of
$C$ of multiplicity $k_{p}\geq3$, we denote by $E_{p}\hookrightarrow Z$
the exceptional curve over $p$. There exists a degree $n^{d}$ map
\[
f:X_{n}\to Z
\]
branched with index $n$ above the curve $\bar{C}$. Above $E_{p}$
lies $n^{d-r}$ $(r=k_{p})$ copies in $X_{n}$ of a smooth curve
$F_{p}$ which is a $(\ZZ/n\ZZ)^{r-1}$-cover of $E_{p}$ ramified
with index $n$ at $r$ points, thus 
\[
e(F_{p})=n^{r-1}(2-r)+rn^{r-2}=n^{r-2}(2n+r(1-n)).
\]
Since the Galois group permutes these curves, we have $(F_{p})^{2}=-n^{r-2}$.
If a singularity $p$ of $C$ is a node, then $X_{n}$ is smooth over
$p$ and the fiber of $f$ at $p$ has only $n^{d-2}$ points. 

We have 
\[
e(C)=2-2g+f_{0}-f_{1},\, e(C\setminus Sing(C))=2-2g-f_{1},\, e(A\setminus C)=-e(C)=2g-2+f_{1}-f_{0},
\]
and $C^{2}=\sum C_{i}^{2}+f_{2}-f_{1}=2g-2+f_{2}-f_{1}$. Therefore
we obtain
\[
e(X_{n}\setminus f^{-1}E_{p})=n^{d}e(A\setminus C)+n^{d-1}e(C\setminus Sing(C))+n^{d-2}t_{2}
\]
and

\[
e(X_{n}\setminus f^{-1}E_{p})/n^{d-2}=n^{2}(2g-2+f_{1}-f_{0})+n(2-2g-f_{1})+t_{2}.
\]
Since above each exceptional divisor $E_{p}$ in $Z$, we have $n^{d-k}$
curves with Euler number $e(F_{p})$, we get 
\[
e(X_{n})=e(X_{n}\setminus f^{-1}E_{p})+\sum_{k\geq3}n^{d-2}t_{k}(2n+k(1-n))
\]
thus
\[
e(X_{n})/n^{d-2}=(2g-2+f_{1}-f_{0})n^{2}+2(1-g+f_{0}-f_{1})n+f_{1}-t_{2}.
\]
Let us now compute the canonical divisor : $K_{X_{n}}$ is numerically
equivalent to the pullback of
\[
K=\sum E_{p}+\frac{n-1}{n}(\sum E_{p}+\pi^{*}C-\sum k_{p}E_{p})=\sum_{p}\frac{2n-1+k_{p}(1-n)}{n}E_{p}+\frac{n-1}{n}\pi^{*}C,
\]
thus 
\[
K^{2}=\sum_{k\geq3}-\frac{(2n-1+k(1-n))^{2}}{n^{2}}t_{k}+(\frac{n-1}{n})^{2}C^{2}.
\]
We obtain
\[
K_{X_{n}}^{2}/n^{d-2}=(2g-2+3f_{1}-4f_{0})n^{2}+4(f_{0}-f_{1}-g+1)n-f_{0}+f_{1}+t_{2}+2g-2.
\]
Since it covers an Abelian surface the Kodaira dimension of $X_{n}$
is non negative.  Then we get by using the Miyaoka-Yau inequality
\begin{equation}
(3c_{2}-K_{X_{n}}^{2})/n^{d-2}=(f_{0}+4g-4)n^{2}+2(f_{0}-f_{1}-g+1)n+2f_{1}+f_{0}-4t_{2}-2g+2\geq0.\label{eq:Mi Yau, v1}
\end{equation}
As in \cite{Hirz86}, we will use refinements of the Miyaoka-Yau inequality
for surfaces that contain smooth rational curves and elliptic curves,
ie for $n=2$ or $3$. Let $Y$ be a surface of non negative Kodaira
dimension. Suppose that there exists on $Y$ smooth disjoint elliptic
curves $D_{j}$ and $m$ disjoint $(-2)$-curves, disjoint also from
the curves $D_{j}$, then:
\begin{thm}
\label{thm:(Hirzebruch-,-Miyaoka}(Miyaoka \cite[Cor. 1.3]{Miyaoka}).
We have 
\[
3c_{2}(Y)-K_{Y}^{2}\geq\frac{9}{2}m-\sum(D_{j})^{2}.
\]

\end{thm}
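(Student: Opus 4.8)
The plan is to reduce the statement to the combination of two well-understood refinements of the Bogomolov--Miyaoka--Yau inequality: one produced by contracting the $(-2)$-curves to rational double points, the other by treating the elliptic curves as a logarithmic boundary. The key observation is that the two corrections decouple because the hypotheses force the $(-2)$-curves and the $D_j$ to be mutually disjoint.

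First I would contract the $m$ disjoint $(-2)$-curves $R_1,\dots,R_m$ by a birational morphism $\sigma\colon Y\to Y'$, obtaining a normal surface $Y'$ with $m$ rational double points of type $A_1$. Since such singularities are canonical, $K_Y=\sigma^{*}K_{Y'}$ and hence $K_{Y'}^{2}=K_Y^{2}$; an Euler-characteristic bookkeeping (each $R_i\cong\PP^{1}$ has $e=2$ and is replaced by a point) gives $e(Y')=c_2(Y)-m$. Because the elliptic curves $D_j$ are disjoint from the $R_i$, they descend to disjoint smooth elliptic curves $\bar D_j\subset Y'$ avoiding the singular points, with $\bar D_j^{2}=(D_j)^{2}$ and $K_{Y'}\cdot\bar D_j=K_Y\cdot D_j$.

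Next I would apply the orbifold logarithmic Miyaoka--Yau inequality to the pair $(Y',\bar D)$ with $\bar D=\sum_j\bar D_j$: under the non-negative Kodaira dimension hypothesis one expects $(K_{Y'}+\bar D)^{2}\le 3\,\bar e_{orb}(Y',\bar D)$, where the orbifold log Euler number corrects $e(Y'\setminus\bar D)$ by the local contribution $1-\tfrac12$ at each $A_1$ point. Two adjunction computations feed into this. On the left, $g(D_j)=1$ gives $K_{Y'}\cdot\bar D_j=-(D_j)^{2}$, hence $(K_{Y'}+\bar D)^{2}=K_Y^{2}-\sum(D_j)^{2}$. On the right, $e(\bar D_j)=0$ for elliptic curves gives $e(Y'\setminus\bar D)=c_2(Y)-m$, so that $\bar e_{orb}(Y',\bar D)=c_2(Y)-m-\tfrac{m}{2}=c_2(Y)-\tfrac{3m}{2}$. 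Substituting and rearranging yields $3c_2(Y)-K_Y^{2}\ge\tfrac{9}{2}m-\sum(D_j)^{2}$, as claimed.

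The hard part is justifying the orbifold-logarithmic inequality in this mixed setting. Its content is the semistability of the appropriate (log, orbifold) cotangent sheaf of $(Y',\bar D)$, which is exactly where the non-negative Kodaira dimension enters, via Miyaoka's generic-semipositivity theorem for the cotangent bundle. One could instead avoid the orbifold language by passing to a finite cover branched so as to resolve the $A_1$ points into a smooth surface (in the spirit of the $(\ZZ/n\ZZ)^{d}$-covers used elsewhere in this paper), applying the ordinary Bogomolov--Miyaoka--Yau inequality there, and dividing by the degree; but handling the elliptic boundary this way requires ramification of growing order and a limiting argument, so the orbifold formulation is the more economical route. Either way, the two corrections, $\tfrac{9}{2}m$ from the $A_1$ singularities and $-\sum(D_j)^{2}$ from the log boundary, arise independently precisely because the elliptic curves are disjoint from the $(-2)$-curves.
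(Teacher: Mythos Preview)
The paper does not prove this theorem: it is quoted verbatim as a result of Miyaoka \cite[Cor.~1.3]{Miyaoka} and used as a black box in the computation of $3c_2-K^2$ for the covers $X_2$ and $X_3$. So there is no ``paper's own proof'' to compare against.

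That said, your sketch is a faithful outline of how Miyaoka's inequality is obtained. The arithmetic is correct: contracting the $m$ disjoint $(-2)$-curves leaves $K^2$ unchanged and drops the Euler number by $m$; adjunction on each elliptic $D_j$ gives $K\cdot D_j=-D_j^2$ and $e(D_j)=0$, so $(K_{Y'}+\bar D)^2=K_Y^2-\sum D_j^2$ and $e(Y'\setminus\bar D)=c_2(Y)-m$; the $A_1$ points each subtract a further $\tfrac12$ from the orbifold Euler number, yielding $c_2(Y)-\tfrac{3m}{2}$; and the orbifold--log BMY inequality $(K_{Y'}+\bar D)^2\le 3\,e_{orb}(Y',\bar D)$ rearranges to the stated bound. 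You are also right that the substantive content lies in establishing that orbifold--log inequality under the hypothesis $\kappa(Y)\ge 0$; this is precisely what Miyaoka's paper supplies (via semipositivity of the cotangent sheaf on a minimal model, combined with the local orbifold corrections for quotient singularities), and the author of the present paper is content to invoke it rather than reprove it.
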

For $n=3,$ we get from equation \ref{eq:Mi Yau, v1}: 
\[
(3c_{2}-K_{X_{3}}^{2})/3^{d-2}=4(7g-7+4f_{0}-f_{1}-t_{2})\geq0
\]
Taking into account the fact that over the $3$-points the surface
contains $3^{d-3}t_{3}$ elliptic curves of self-intersection $-3$,
we can refine that inequality and obtain 
\[
-\frac{f_{1}}{f_{0}}\geq\frac{t_{2}+\frac{1}{4}t_{3}-7g+7}{f_{0}}-4.
\]
For $g=1$, ie $C$ is a configuration of elliptic curves, let $Sing(C)$
be the singularity set of $C$, then: 
\[
H(C,Sing(C))=-\frac{f_{1}}{f_{0}}\geq\frac{t_{2}+\frac{1}{4}t_{3}}{f_{0}}-4.
\]
For $\mathcal{P}$ a (non void) set of points in $A$, one can use
the same demonstration as in \cite[thm 3.3]{BDHH} for the linear
$H$-constant of $\PP^{2}$ to conclude that 
\[
H(C,\mathcal{P})\geq H(C)=-\frac{f_{1}}{f_{0}}\geq\frac{t_{2}+\frac{1}{4}t_{3}}{f_{0}}-4.
\]
Suppose that the bound $-4$ is attained, then $4f_{0}=f_{1}$, $t_{2}=t_{3}=0$
and from equality $\sum_{k\geq5}(k-4)t_{k}=2t_{2}+t_{3}$, we see
that $t_{k}=0$ $\forall k\geq5$. The only possibility is $t_{4}\not=0$,
which indeed exists (see below).

For $n=2,$ the surface $X_{2}$ contains $2^{d-3}t_{3}$ disjoint
$(-2)$-curves and it contains $t_{4}2^{d-4}$ elliptic curves of
self-intersection $-4$, therefore
\[
(3c_{2}-K_{X_{2}}^{2})/2^{d-2}\geq\frac{9}{4}t_{3}+t_{4}
\]
and
\[
10g-10+9f_{0}\geq2f_{1}+4t_{2}+\frac{9}{4}t_{3}+t_{4}.
\]
We obtain the following inequality:
\[
10g-10+t_{2}+\frac{3}{4}t_{3}\geq\sum_{k\geq5}(2k-9)t_{k}.
\]
Using $H(C,Sing(C))=\frac{C^{2}-f_{2}}{f_{0}}=\frac{\sum C_{i}^{2}-f_{1}}{f_{0}}$
and $2g-2=\sum C_{i}^{2}$, we obtain 
\[
H(C,Sing(C))\geq\frac{2t_{2}+\frac{9}{8}t_{3}+\frac{1}{2}t_{4}+8-8g}{f_{0}}-\frac{9}{2}.
\]

\section{The Elliptic $H$-constants for Abelian surfaces}

Let $X$ be a smooth projective surface and $C$ be a configuration
of smooth disjoint elliptic curves on $X$. Let us recall the following
result:
\begin{thm}
(Kobayashi) There exists a torsion free lattice $\Gamma\subset PU(2,1)$
such that $(X,C)$ is a smooth compactification of the ball quotient
surface $\mathbb{B}_{2}/\Gamma$ if and only if $(K_{X}+C)^{2}=3e(X\setminus C)$. 
\end{thm}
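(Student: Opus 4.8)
This is Kobayashi's logarithmic Bogomolov--Miyaoka--Yau characterisation of ball quotient surfaces (closely related to work of Cheng--Yau and Tian--Yau); the plan is to prove the two implications separately, the analytic input being a complete, finite-volume Kähler--Einstein metric on $U:=X\setminus C$.

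For the forward implication I would argue as follows. Suppose $U=\mathbb{B}_2/\Gamma$ for a torsion free lattice $\Gamma\subset PU(2,1)$, so that the $C_i$ are the components of the boundary divisor of a smooth toroidal compactification. By the structure theory of smooth toroidal compactifications of ball quotient surfaces (Hummel, Mok), each cusp of $\Gamma$ contributes exactly one boundary component, which is necessarily an elliptic curve of negative self-intersection; this is consistent with the hypothesis on $C$. Mumford's extension of the Hirzebruch proportionality principle to the non-compact, logarithmically compactified case then gives that the logarithmic Chern numbers of $(X,C)$ are proportional to those of the compact dual $\mathbb{P}^2$, i.e.\ $\bar c_1(X,C)^2=3\,\bar c_2(X,C)$. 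Since $\bar c_1(X,C)=-(K_X+C)$ and, $C$ being a disjoint union of elliptic curves so that $e(C)=0$, $\bar c_2(X,C)=e(X)-e(C)=e(U)$, this reads exactly $(K_X+C)^2=3\,e(U)$.

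For the converse, assume $(K_X+C)^2=3\,e(U)$. I would first reduce to the case in which $K_X+C$ is nef and big: its restriction to each elliptic component $C_i$ has degree $K_X\cdot C_i+C_i^2=2g(C_i)-2=0$, so failure of nefness can only come from $(-1)$-curves $E$ with $E\cdot C\le 1$, which may be contracted without changing $U$, $(K_X+C)^2$ or $e(U)$; on the resulting relatively minimal model the remaining non-log-general-type cases are excluded directly, using that the boundary consists of elliptic (not rational) curves, and bigness follows from $(K_X+C)^2=3e(U)>0$. With $K_X+C$ nef and big and boundary a disjoint union of elliptic curves, the existence theorem of R. Kobayashi (and of Cheng--Yau, Tian--Yau) supplies a complete Kähler--Einstein metric $\omega$ on $U$ with $\mathrm{Ric}(\omega)=-\omega$, of finite volume, with the standard cusp asymptotics along each $C_i$. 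Because these boundary components are elliptic curves, the logarithmic Chern--Gauss--Bonnet and self-intersection formulas hold with no correction terms:
\[
\int_U c_2(\omega)=e(U),\qquad \int_U c_1(\omega)^2=(K_X+C)^2.
\]
The pointwise Chern--Weil inequality $3\,c_2(\omega)-c_1(\omega)^2\ge 0$, valid for the curvature form of any Kähler--Einstein surface with equality at a point precisely when the holomorphic sectional curvature is constant there, then gives $3\,e(U)-(K_X+C)^2=\int_U\bigl(3c_2(\omega)-c_1(\omega)^2\bigr)\ge 0$; the hypothesis forces equality, hence $\omega$ has constant negative holomorphic sectional curvature everywhere. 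By the uniformisation theorem for complete Kähler manifolds of constant negative holomorphic sectional curvature, the universal cover of $(U,\omega)$ is the unit ball $\mathbb{B}_2$ with a scalar multiple of its Bergman metric; thus $U=\mathbb{B}_2/\Gamma$ with $\Gamma=\pi_1(U)\hookrightarrow PU(2,1)$, acting freely since $U$ is a smooth manifold and with finite covolume since $\mathrm{vol}(U,\omega)<\infty$, so $\Gamma$ is a torsion free lattice and $(X,C)$ is its smooth compactification.

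The main obstacle is the analytic core of the converse: constructing the complete finite-volume Kähler--Einstein metric and controlling its asymptotics near the elliptic boundary curves precisely enough that the logarithmic Chern--Gauss--Bonnet and $c_1^2$ identities hold without error terms — together with the preliminary reduction establishing that $K_X+C$ is nef and big, so that this existence theory applies.
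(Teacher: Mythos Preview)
The paper does not give a proof of this statement: it is quoted as a result of Kobayashi (with the reference \cite{Kobayashi}), introduced by ``Let us recall the following result'', and is then used as a black box in the subsequent Corollary. So there is no ``paper's own proof'' to compare your attempt against.

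That said, your sketch is a faithful outline of the standard argument behind Kobayashi's theorem (the log Bogomolov--Miyaoka--Yau equality case via a complete finite-volume K\"ahler--Einstein metric on $X\setminus C$, together with Hirzebruch--Mumford proportionality for the forward implication). The self-identified ``main obstacle'' is exactly where the real work lies in Kobayashi's paper: the existence and cusp asymptotics of the K\"ahler--Einstein metric near the elliptic boundary components, and the fact that the log Chern--Gauss--Bonnet identities hold without correction. One small point to tighten: in your reduction to $K_X+C$ nef and big you invoke $(K_X+C)^2=3e(U)>0$ without justifying $e(U)>0$; in the setting of the paper this is immediate since $X$ is a blow-up of an abelian surface at $f_0>0$ points and $e(C)=0$, but in general you should note that a genuine finite-covolume ball quotient always has $e(U)>0$ (positive volume), so the degenerate case $e(U)\le 0$ is excluded a priori on both sides of the equivalence.
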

Let $C$ be an elliptic curve configuration on a an Abelian surface
$A$. Let $X\to A$ be the blow-up at the singular points of $C$
and let $\bar{C}$ be the strict transform of $C$ in $X$. We obtain:
\begin{cor}
The elliptic curve configuration $C$ on $A$ has $H$-index $H(C)=-4$
if and only if $(X,\bar{C})$ is a smooth compactification of a ball
quotient surface. \\
In that case, the curve $C$ has only $4$-points singularities and
there exists a covering $X_{3}\to A$ branched with order $3$ over
$[3]^{*}C$ such that $X_{3}$ is a smooth compact ball quotient surface:
$c_{1}^{2}(X_{3})=3c_{2}(X_{3})$.\end{cor}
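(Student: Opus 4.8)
The plan is to read off Kobayashi's equality $(K_{X}+\bar C)^{2}=3e(X\setminus\bar C)$ purely in terms of the numbers $f_{0},f_{1}$ attached to $C$, compare it with the formula $H(C)=-f_{1}/f_{0}$ from Theorem~\ref{prop:elliptic h const >-4}, and then, for the supplementary claim, feed a suitable $3$-divisible pull-back of $C$ into the $(\ZZ/n\ZZ)^{d}$-cover machinery of Section~2 with $n=3$.

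First I would compute on the blow-up $\pi\colon X\to A$ at $Sing(C)$. Since $K_{A}=0$, one has $K_{X}=\sum_{p}E_{p}$ over the singular points $p$, and $\bar C=\pi^{*}C-\sum_{p}k_{p}E_{p}$ where $k_{p}$ is the multiplicity at $p$; hence $K_{X}+\bar C=\pi^{*}C+\sum_{p}(1-k_{p})E_{p}$. Using $C_{i}^{2}=0$, so that $C^{2}=2\sum_{i<j}C_{i}C_{j}=f_{2}-f_{1}$, together with $\sum_{p}(1-k_{p})^{2}=f_{2}-2f_{1}+f_{0}$, I get $(K_{X}+\bar C)^{2}=f_{1}-f_{0}$. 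On the other hand $e(X)=e(A)+f_{0}=f_{0}$, and because all singularities of $C$ are ordinary the curves $\bar C_{i}$ are disjoint smooth elliptic curves, so $e(\bar C)=0$ and $e(X\setminus\bar C)=f_{0}$. Finally $\bar C^{2}=C^{2}-f_{2}=-f_{1}$, so $H(C,Sing(C))=\bar C^{2}/f_{0}=-f_{1}/f_{0}=H(C)$ by Theorem~\ref{prop:elliptic h const >-4}. Therefore Kobayashi's criterion $(K_{X}+\bar C)^{2}=3e(X\setminus\bar C)$ reads $f_{1}-f_{0}=3f_{0}$, i.e. $-f_{1}/f_{0}=-4$, i.e. $H(C)=-4$; and since $\bar C$ is a disjoint union of smooth elliptic curves, Kobayashi's theorem applies to $(X,\bar C)$ with no extra hypothesis. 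This gives the equivalence.

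For the remaining assertions, assume $H(C)=-4$. Then Theorem~\ref{prop:elliptic h const >-4} gives $t_{k}=0$ for $k\neq 4$. Since $[3]^{*}C_{i}\sim 3L_{i}$ with $L_{i}=2C_{i}+[-1]^{*}C_{i}$, the divisor $[3]^{*}C$ is $3$-divisible, and it is again a configuration of smooth elliptic curves with only $4$-points because $[3]$ is \'etale; by Namba's theorem~\ref{thm:(Namba).-There-exists}, applied exactly as in Section~2 with $n=3$ to the pair $(A,[3]^{*}C)$, there is a $(\ZZ/3\ZZ)^{d}$-cover $X_{3}\to A$ branched with index $3$ over $[3]^{*}C$, with $X_{3}$ the smooth surface obtained by first blowing up the $4$-points and then taking the cover. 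All quantities entering the Chern-number formulas of Section~2 get multiplied by $\deg[3]=81$ under $[3]^{*}$, so for $[3]^{*}C$ one still has $g=1$, $t_{2}=t_{3}=0$ and $f_{1}=4f_{0}$; substituting into $(3c_{2}-K_{X_{3}}^{2})/3^{d-2}=4(7g-7+4f_{0}-f_{1}-t_{2})$ yields $3c_{2}(X_{3})=K_{X_{3}}^{2}$, while the Euler-number formula of Section~2 gives $e(X_{3})>0$, hence $c_{1}^{2}(X_{3})=3c_{2}(X_{3})>0$.

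It remains to upgrade this numerical equality to the statement that $X_{3}$ is a smooth compact ball quotient. The surface $X_{3}$ is smooth and compact (a resolved finite cover of $A$), and it dominates the abelian surface $A$, so $q(X_{3})\geq 2$; together with $K_{X_{3}}^{2}>0$ this excludes $\kappa(X_{3})\leq 1$ (for $\kappa\leq 1$ the minimal model, hence a fortiori $X_{3}$, would satisfy $K^{2}\leq 0$), so $X_{3}$ is of general type, and then the Bogomolov--Miyaoka--Yau inequality applied to the minimal model forces $X_{3}$ itself to be minimal. Yau's uniformization theorem in the equality case $c_{1}^{2}=3c_{2}$ then identifies $X_{3}$ with a compact ball quotient $\mathbb{B}_{2}/\Gamma_{3}$, with $\Gamma_{3}$ torsion free because $X_{3}$ is smooth. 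The one genuinely delicate point is this last paragraph: checking the hypotheses (smoothness of $X_{3}$, general type, minimality) under which the Miyaoka--Yau equality yields an honest ball quotient; everything else is the intersection- and Euler-number bookkeeping of Section~2 specialized to $g=1$ and to configurations with only $4$-points.
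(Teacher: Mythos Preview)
Your proof is correct and follows essentially the same route as the paper: compute $(K_X+\bar C)^2=f_1-f_0$ and $e(X\setminus\bar C)=f_0$ so that Kobayashi's criterion becomes $f_1=4f_0$, i.e.\ $H(C)=-4$; then plug $g=1$, $t_2=0$, $f_1=4f_0$ into the Chern-number formulas of Section~2 at $n=3$. The paper is slightly slicker in the second half, observing directly from equation~\eqref{eq:Mi Yau, v1} that for such a configuration $(3c_2-K_{X_n}^2)/n^{d-2}=f_0(n-3)^2$, which vanishes precisely at $n=3$; conversely, your final paragraph (checking $c_2(X_3)>0$, general type, minimality, and then invoking Yau) supplies justification that the paper leaves implicit.
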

\begin{rem}
Hirzebruch constructed such compact ball quotient surface in \cite{Hirz84}
for the case of the Eisenstein Abelian surface (see below).\end{rem}
\begin{proof}
One computes that $(K_{X}+\bar{C})^{2}=f_{1}-f_{0}$ and $e(X\setminus\bar{C})=f_{0}$.
Therefore, one has equality $(K_{X}+\bar{C})^{2}=3e(X\setminus\bar{C})$
if and only if $4f_{0}=f_{1}$, which is equivalent by Theorem \ref{thm:Refined Miyaoka Sakai inequality}
to $H(C)=-4$ and to the condition that $C$ has only $4$-points.

By equation \ref{eq:Mi Yau, v1}, the value of $(3c_{2}(X_{n})-K_{X_{n}}^{2})/n^{d-2}$
is $f_{0}(n-3)^{2}$, thus for $n=3$, we obtain a compact ball quotient
surface. 
\end{proof}
Let be $j=\frac{-1+i\sqrt{3}}{2}$ with $i^{2}=-1$, we have:
\begin{prop}
The elliptic $H$-constants of $(\CC/\ZZ[i])^{2}$ and $(\CC/\ZZ[j])^{2}$
are equal to $-4$.\end{prop}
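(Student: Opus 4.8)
The plan is to exhibit, on each of the two abelian surfaces $A=(\CC/\ZZ[i])^2$ and $A=(\CC/\ZZ[j])^2$, an explicit configuration $C$ of smooth elliptic curves whose only singularities are $4$-points, and then invoke the last sentence of Theorem~\ref{thm:Refined Miyaoka Sakai inequality} (equivalently the preceding Corollary): once $t_k=0$ for all $k\neq 4$, the chain of inequalities $H(C)=-f_1/f_0\geq (t_2+\tfrac14 t_3)/f_0-4\geq-4$ collapses to $H(C)=-4$, so $H_{El,A}\leq-4$; combined with the general lower bound $H_{El,A}\geq-4$ from Theorem~\ref{prop:elliptic h const >-4}, this forces $H_{El,A}=-4$. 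So the entire content is the construction of such a configuration on each of the two surfaces.

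For $A=(\CC/\ZZ[j])^2$ (the Eisenstein case) I would simply cite Hirzebruch's construction in \cite{Hirz84}: there one takes the natural arrangement of elliptic curves coming from the lines of the ``dual Hesse'' type configuration — concretely, on $E\times E$ with $E=\CC/\ZZ[j]$ one uses the curves $E\times\{pt\}$, $\{pt\}\times E$, the diagonal, and the graphs of multiplication-by-$j$ (and its conjugate), suitably translated by the $3$-torsion — and checks that all intersection points are triple, then passes to an isogenous surface (or applies $[2]^*$ as in Section~2) to arrange only $4$-points, or else refers directly to Hirzebruch's list which already yields $c_1^2=3c_2$ on the resolution, i.e. only $4$-points by the Corollary. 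For $A=(\CC/\ZZ[i])^2$ (the Gaussian case) the analogous configuration is the one in Holzapfel \cite[Example 5.4]{Holzapfel}, alluded to right after Theorem~\ref{prop:elliptic h const >-4}: the curves $E\times\{pt\}$, $\{pt\}\times E$, the diagonal, the anti-diagonal, and the graph of multiplication-by-$i$, translated over the $2$-torsion points, so that every singular point is a $4$-fold point. In both cases I would verify the $4$-point claim by a local count: at an intersection point of two of the curves, the extra automorphism ($j$ or $i$) produces exactly the right number of further curves through that point, the count being dictated by the relevant torsion group $(\ZZ/3)$ or $(\ZZ/2)$.

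An alternative, more self-contained route avoids quoting $c_1^2=3c_2$: directly compute $f_0,f_1$ for the chosen configuration and observe $f_1=4f_0$. If the configuration consists of $d$ elliptic curves, each $C_i$ being a translate of one of finitely many subtori, then for each pair $C_i,C_j$ one computes $C_i\cdot C_j$ from the intersection theory on the abelian surface (a determinant of period vectors), and groups the $C_i\cdot C_j$ intersection points into orbits under the torsion translations to read off the $t_k$'s. The key identity to check is that every such intersection point lies on exactly $4$ of the curves, which is where the special multiplication ($\sqrt{-1}$ or $\sqrt{-3}$) is used. Then $H(C,Sing(C))=(2g-2-f_1)/f_0=-f_1/f_0=-4$ since $g=1$, and $H(C)\le H(C,Sing(C))=-4$.

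The main obstacle is making the configuration and the $4$-point verification genuinely explicit and correct: one must pin down exactly which elliptic curves to take and which torsion translates to include so that (a) the $C_i$ are smooth and the arrangement has only ordinary singularities, and (b) every singular point has multiplicity precisely $4$ — too few curves gives triple points (as in the $C_n$ of Section~1, whose $H$-index only tends to $-4$), too many or badly chosen translates breaks the ``only ordinary singularities'' hypothesis. I expect to handle this by following Hirzebruch's and Holzapfel's constructions verbatim rather than reinventing them, and by citing Kobayashi's ball-quotient characterization together with the equivalence proved in the Corollary to certify the $4$-point property without a separate combinatorial argument. Everything else — the inequality $-4\le H_{El,A}$ and the collapse of the inequality chain — is already available from Theorem~\ref{prop:elliptic h const >-4} and Theorem~\ref{thm:Refined Miyaoka Sakai inequality}.
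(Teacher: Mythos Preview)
Your approach is essentially the paper's: cite the Hirzebruch and Holzapfel configurations having only $4$-points, deduce $H(C)=-4$ from Theorem~\ref{thm:Refined Miyaoka Sakai inequality} (or the preceding Corollary), and combine with the general bound $H_{El,A}\geq-4$. The paper's proof is even terser --- it just records the numerical data ($4$ elliptic curves with $t_4=1$, respectively $6$ elliptic curves with $t_4=3$, all other $t_k=0$) and assigns the two references to the surfaces in the opposite order from yours, so your more elaborate speculative descriptions (torsion translates, passing to an isogeny to turn triple points into $4$-points) are unnecessary and should be replaced by the actual configurations from \cite{Hirz84} and \cite{Holzapfel}.
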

\begin{proof}
Hirzebruch \cite{Hirz84} and Holzapfel \cite{Holzapfel} found elliptic
curves arrangements on $(\CC/\ZZ[i])^{2}$ and $(\CC/\ZZ[j])^{2}$
respectively that satisfies Kobayashi's criteria on suitable blow-up.
In the first example, one has $4$ elliptic curves with $t_{4}=1$
and $t_{k}=0,\, k\not=4$, in the second one has $6$ elliptic curves
with $t_{4}=3$ and $t_{k}=0,\, k\not=4$.\end{proof}
\begin{rem}
Hirzebruch and Holzapfel examples are elliptic curves configurations
on Abelian surfaces with only $4$-points singularities. That situation
must be compared with the plane where there do not exist configurations
of $d>6$ lines with $t_{2}=t_{3}=0$ and $t_{d}=t_{d-1}=t_{d-2}$
(by inequality \ref{eq:Hirzebruch} below).
\end{rem}
Let $E,E'$ be two elliptic curves and let $A$ be an Abelian surface. 
\begin{prop}
The $H$-constant and elliptic $H$-constant of $A$ are invariants
of the isogeny class of $A$. Suppose $A$ is isogenous to $E\times E'$.
If $E$ and $E'$ are not isogenous, then $H_{El,A}=-2$. If $E$
and $E'$ are isogenous, then $H_{El,A}\leq-3$.
\end{prop}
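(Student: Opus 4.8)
The statement has three parts. \emph{Isogeny invariance:} If $\phi\colon B\to A$ is an isogeny of degree $m$, then for any reduced curve $C$ on $A$ and any nonempty finite set $\mathcal{P}\subset A$ disjoint from the (empty) branch locus, $\phi^{*}C$ is a reduced curve on $B$ with the same $k$-point multiplicities and $\phi^{*}\mathcal{P}$ has $m\cdot\#\mathcal{P}$ points, so Lemma \ref{lem:Then branch} gives $H_{B}(\phi^{*}C,\phi^{*}\mathcal{P})=H_{A}(C,\mathcal{P})$; taking the infimum over all $C,\mathcal{P}$ on each side yields $H_{B}\leq H_{A}$. For the reverse inequality one must push forward: given a curve $D$ and points $\mathcal{Q}$ on $B$, replace $D$ by $\psi^{*}D$ where $\psi\colon B\to A$ is a complementary isogeny with $\psi\circ\phi=[m]$ (or use that $[m]^{*}$ composed appropriately), so that $H_{A}$ sees the same value; since the infima are over all configurations, translating a configuration on $B$ by $[m]$ and pushing down works, giving $H_{A}\leq H_{B}$. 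The same argument with $C$ (resp. $D$) ranging over unions of smooth elliptic curves — noting that $\phi^{*}$ of a smooth elliptic curve is a disjoint union of smooth elliptic curves, and the image of a smooth elliptic curve under an isogeny is again a smooth elliptic curve — gives $H_{El,A}=H_{El,B}$.

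\emph{The split non-isogenous case.} Suppose $A$ is isogenous to $E\times E'$ with $E\not\sim E'$. By isogeny invariance we may assume $A=E\times E'$. The only smooth elliptic curves on $A$ are the fibres $E\times\{p'\}$ and $\{p\}\times E'$ of the two projections (any other elliptic curve would give a nonzero homomorphism $E\to E'$ or project isomorphically, forcing $E\sim E'$ — this is the key input and follows from $\End(E)\otimes\QQ$ and $\End(E')\otimes\QQ$ having no common subfield of the relevant type, more simply from $\Hom(E,E')=0$). A configuration of such curves consists of $a$ horizontal and $b$ vertical fibres; its singularities are exactly the $ab$ intersection points, each an ordinary double point, so $t_{2}=ab$, $t_{k}=0$ for $k\geq3$, $f_{0}=ab$, $f_{1}=2ab$. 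By Theorem \ref{thm:Refined Miyaoka Sakai inequality}, $H(C)=-f_{1}/f_{0}=-2$ for every such $C$ (with $a,b\geq1$), and $H(C,\mathcal{P})\geq H(C)=-2$; hence $H_{El,A}=-2$.

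\emph{The isogenous split case.} Now suppose $E\sim E'$; again assume $A=E\times E$. In addition to horizontal and vertical fibres, the graphs of the isogenies (and translates thereof) $\Gamma_{\psi}=\{(x,\psi(x)+c)\}$ are smooth elliptic curves on $A$. The plan is to produce a configuration with $H(C)\leq-3$. A natural candidate: take the three ``coordinate'' families — horizontal lines $E\times\{0\}$-translates, vertical lines, and diagonals $\{(x,x)\}$-translates — restricted to the $n$-torsion, giving $3n$ curves through the $n$-torsion pattern; count multiplicities to get $f_{1}/f_{0}$ approaching (or equal to) something $\geq3$. More cleanly, one can cite the Hesse-type configuration: on $(\CC/\ZZ[j])^{2}$ (which is isogenous to a product of isogenous elliptic curves) Holzapfel's $6$-curve configuration has $t_{4}=3$ and $H(C)=-4\leq-3$, but to get the bound for a general isogenous product one wants an explicit elementary configuration. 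The hard part will be this last step: exhibiting, on $E\times E$ for \emph{arbitrary} $E$, an explicit finite configuration of elliptic curves (built from a few isogeny-graph families and their torsion translates) whose intersection combinatorics is computable and forces $-f_{1}/f_{0}\leq-3$. I expect one takes three families of $n$ parallel copies each (horizontal, vertical, diagonal), computes that the triple points occur at $n^{2}$ positions while double points are comparatively few, obtains $f_{1}/f_{0}\to3$ as $n\to\infty$, and concludes $H_{El,A}\leq-3$ by passing to the infimum; the combinatorial bookkeeping of which torsion points are double versus triple points is the only real obstacle, and isogeny invariance reduces it to the single surface $(\CC/\ZZ[j])^{2}$ where it is already known.
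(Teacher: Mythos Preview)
Your treatment of isogeny invariance and of the non-isogenous split case is essentially the paper's argument: both use Lemma~\ref{lem:Then branch} for one inequality and the existence of an isogeny in the reverse direction for the other, and both identify the elliptic curves on $E\times E'$ (when $\Hom(E,E')=0$) as horizontal and vertical fibres, giving $H(C)=-f_{1}/f_{0}=-2$.

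For the isogenous case you overshoot. The paper dispenses with limits entirely: on $E\times E$ take the single configuration $C=\Delta+F+F'$ with $\Delta$ the diagonal, $F=E\times\{0\}$, $F'=\{0\}\times E$. These three curves meet only at the origin, which is a $3$-point; blowing up there gives $\bar{C}^{2}=C^{2}-9=6-9=-3$, hence $H(C,\{0\})=-3$ and $H_{El,A}\le -3$. Your $3n$-curve family (horizontal, vertical, diagonal translates by a subgroup $G\subset E$ of order $n$) also works and in fact gives $H(C)=-3$ \emph{exactly}, not merely in the limit: every point of $G\times G$ lies on precisely one curve of each family, so $t_{3}=n^{2}$ and $t_{k}=0$ for $k\ne3$. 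The paper's argument is just the case $n=1$ of yours.

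One genuine error: your closing remark that ``isogeny invariance reduces it to the single surface $(\CC/\ZZ[j])^{2}$'' is false. Isogeny invariance lets you pass from $A$ to $E\times E$, but a general elliptic curve $E$ is not isogenous to $\CC/\ZZ[j]$, so $E\times E$ need not be isogenous to $(\CC/\ZZ[j])^{2}$. You must produce a configuration on $E\times E$ for arbitrary $E$; fortunately the diagonal-plus-two-fibres example does exactly that.
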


\begin{proof}
Let $\phi:A\to B$ be an isogeny between two Abelian surfaces ; it
is an étale map. Let $C$ be a (reduced) curve on $B$, and let $\mathcal{P}$
be a set of points on $B$. By Lemma \ref{lem:Then branch}, $H_{A}(\phi^{*}\mathcal{P},\phi^{*}C)=H_{B}(C,\mathcal{P})$,
thus 
\[
\inf_{C,\mathcal{P}}H_{A}(C,\mathcal{P})\leq\inf_{C,\mathcal{P}}H_{B}(C,\mathcal{P}).
\]
Since there exists an isogeny $\psi:B\to A$ too, we have the reverse
inequality. That holds also for the elliptic $H$-constant, since
the pull-back by an isogeny of a genus one curve is a union of genus
one curves. 

Let $A$ be isogenous to $E\times E'$. Suppose that $E$ and $E'$
are not isogenous. Then a configuration $C$ of elliptic curves on
$E\times E'$ is as follows: 
\[
C=\sum_{k=1}^{m}F_{k}+\sum_{k=1}^{n}F'_{k}
\]
where the $F_{k}$ (resp. $F'_{k}$) are fibers of the fibration of
$E\times E'$ onto $E$ (resp. $E'$). Then one sees that $\bar{C}^{2}/s$
is minimal and equals $-2$ when the blow-up is taken over the $mn$
intersection points of $F_{i}$ and $F'_{j}$.

Suppose that $E$ and $E'$ are isogenous. Since the elliptic $H$-constant
is isogeny invariant, we can suppose that $E=E'$. Let $\Delta$ be
the diagonal in $E\times E$ and let be $F=\{y=0\}$, $F'=\{x=0\}$,
where $x,y$ are the coordinates. Then $C=\Delta+F+F'$ has one $3$-point
in $0$, and in the blow-up at $0$, we get $\bar{C}^{2}=-3$. 
\end{proof}

\section{Remarks on the Harbourne index of curves}

\subsection{Irreducible curves with low $H$-index}

In view of BNC Conjecture \ref{Conjecture BNC}, it would be interesting
to know irreducible curves $C$ with low Harbourne index
\[
H(C)=\min_{\mathcal{P}}H(C,\mathcal{P}),
\]
then by taking the appropriate blow-up one could get very negative
curves and maybe obtain counter examples to that Conjecture. By example,
if one consider the rational curves of degree $d$ in $\mathbb{P}^{2}$
with the maximal number of nodes, the $H$-index $H(C)$ is close
to $-2$ (on that subject, see also Introduction of \cite{BDHH}).
But if one is willing to go down, we must impose singularities with
higher multiplicities on $C$, and it forces $C$ to have ``negative
genus'' ie to be union of (at least) two curves. This explain the
idea to consider union of curves rather than irreducible curves. Moreover
considering reducible curves gives more functorialities to the $H$-constants,
e.g. when one wishes to compare these constants through a dominant
map $f:X\to Y$.

On that problem of constructing infinitely many curves with low $H$-index,
the example of totally geodesic curves on a Shimura surface $X$ is
particularly interesting. If smooth, such a curves $C$ satisfy $C^{2}=-2(g-1)$
(for $g$ the genus) and were considered as possible counter examples
for BNC. But one of the main results of \cite{BHK} is the fact on
$X$, there are at most a finite number of such curves $C$ with $C^{2}<0$.
However, when one take some blow-up of $X$, one do not know the behavior
of $\bar{C}^{2}$ for $\bar{C}$ the strict transform of such a curve
$C$. \\
The singularities of a totally geodesic curve $C$ on a Shimura or
a Ball quotient surface are ordinary, and so are the singularities
of a union of such curves. This follows from the fact that $C$ is
the image of the trace of lines in the universal cover, and singularities
of a union of lines are ordinary. Let $X$ be a Shimura surface and
$C$ be a (irreducible) totally geodesic curve on it. Let $\delta\in\NN$
be defined by $K_{X}C+C^{2}=2g-2+2\delta$. We have $2\delta=f_{2}-f_{1}$
(where $f_{i}=\sum k^{i}t_{k}$, for $t_{k}$ the number of $k$-points).
Since $C$ is a totally geodesic curve, we have $K_{X}C=4g-4>3g-3$
and therefore \cite[Cor. 4.2]{Ciliberto} implies that $C^{2}\sim2\delta$
when the geometric genus $g$ of $C$ goes to $\infty$. Therefore
\[
H(C,Sing(C))\sim-\frac{f_{1}}{f_{0}}
\]
when $g\to\infty$ and since always $f_{1}\geq2f_{0}$, we obtain
a second example of irreducible curves on a surface with 
\[
\liminf_{C}H(C)\leq-2.
\]

\subsection{The $H$-index of line arrangement}

For the case of an arrangement of $d$ lines in $\PP^{2}$ with $t_{d}=t_{d-1}=t_{d-2}=t_{d-3}=0$
and $d\geq6$, Hirzebruch \cite[140]{Hirzebruch} proved the following
inequality 
\begin{equation}
t_{2}+\frac{3}{4}t_{3}\geq d+\sum_{k\geq5}(k-4)t_{k}.\label{eq:ZZBauer}
\end{equation}
Using that results, the authors of \cite{BDHH} obtained the following
inequality for the $H$-index of such a line arrangement $C$: 
\begin{equation}
H(C)\geq B_{1}:=-4+\frac{1}{\sum t_{k}}(2d+t_{2}+\frac{1}{4}t_{3}).\label{eq:bauer}
\end{equation}
A better inequality for the singularities of line arrangements is
given in \cite[eq. (9)]{Hirz86}, which is 
\begin{equation}
t_{2}+\frac{3}{4}t_{3}\geq d+\sum_{k\geq5}(2k-9)t_{k}.\label{eq:Hirzebruch}
\end{equation}

\begin{prop}
Using inequality \ref{eq:Hirzebruch}, we obtain:
\begin{equation}
H(C)\geq B_{2}:=\frac{1}{\sum t_{k}}(\frac{3}{2}d+2t_{2}+\frac{9}{8}t_{3}+\frac{1}{2}t_{4})-\frac{9}{2}.\label{eq:Nouvelle}
\end{equation}
and moreover that inequality \ref{eq:Nouvelle} is sharper than \ref{eq:bauer}
: $B_{2}\geq B_{1}$.
\end{prop}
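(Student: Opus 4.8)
The plan is to derive \eqref{eq:Nouvelle} directly from Hirzebruch's refined inequality \eqref{eq:Hirzebruch} together with the combinatorial identities satisfied by the $t_k$'s, following the same strategy used in \cite{BDHH} to obtain \eqref{eq:bauer} from \eqref{eq:ZZBauer}. First I would recall the two basic identities for an arrangement of $d$ lines: the count of pairs of lines, $\binom{d}{2}=\sum_{k\geq2}\binom{k}{2}t_k$, and the fact that the $H$-index is realized by blowing up at $\operatorname{Sing}(C)$, so that
\[
H(C)=H(C,\operatorname{Sing}(C))=\frac{d^{2}-\sum_{k\geq2}k^{2}t_{k}}{\sum_{k\geq2}t_{k}}=\frac{d-\sum_{k\geq2}k(k-1)t_{k}}{\sum_{k\geq2}t_{k}},
\]
where in the last step I used $d^2 = d + 2\sum\binom{k}{2}t_k$. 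So $H(C)=\bigl(d-f_1+\sum t_k\bigr)\big/\sum t_k - 1$, but the cleaner bookkeeping is to write $H(C)\cdot\sum t_k = d - \sum_k k(k-1)t_k$.

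Next I would substitute: the goal inequality \eqref{eq:Nouvelle} is equivalent, after multiplying by $\sum t_k$, to
\[
d-\sum_{k\geq2}k(k-1)t_{k}\ \geq\ \tfrac{3}{2}d+2t_{2}+\tfrac{9}{8}t_{3}+\tfrac{1}{2}t_{4}-\tfrac{9}{2}\sum_{k\geq2}t_{k}.
\]
Collecting terms, this becomes an inequality of the shape $\sum_{k\geq2}c_k t_k \geq \tfrac12 d$ for explicit coefficients $c_k$, where I expect $c_2,c_3,c_4$ to involve the constants $2,\tfrac98,\tfrac12$ shifted by $\tfrac92-k(k-1)$, and $c_k = \tfrac92 - k(k-1)$ for $k\geq 5$ (which is negative and grows like $-k^2$). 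The point is then that this is exactly what \eqref{eq:Hirzebruch} gives, perhaps after rescaling: \eqref{eq:Hirzebruch} reads $t_2+\tfrac34 t_3 - \sum_{k\geq5}(2k-9)t_k \geq d$, and I would verify that my target inequality is a nonnegative multiple of this one plus possibly the trivial bound $t_k\geq0$ applied to the $k\geq5$ terms where the coefficient matching leaves slack. The routine check is that $\tfrac92 - k(k-1)$ compared with $-\tfrac12(2k-9) = \tfrac92 - k$ leaves a remainder $-k(k-2)\leq 0$ for $k\geq5$, which is the wrong sign — so I would instead match against $\tfrac12$ times \eqref{eq:Hirzebruch} and check each coefficient $k=2,3,4$ and $k\geq5$ separately, absorbing surpluses into $t_k\geq 0$.

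Finally, for the claim $B_2\geq B_1$: here I would simply subtract. We have $B_2-B_1 = \frac{1}{\sum t_k}\bigl(\tfrac32 d+2t_2+\tfrac98 t_3+\tfrac12 t_4 - 2d - t_2 - \tfrac14 t_3\bigr) - \tfrac92 + 4 = \frac{1}{\sum t_k}\bigl(-\tfrac12 d + t_2 + \tfrac78 t_3 + \tfrac12 t_4\bigr) - \tfrac12$. So $B_2\geq B_1$ is equivalent to $t_2 + \tfrac78 t_3 + \tfrac12 t_4 \geq \tfrac12 d + \tfrac12\sum t_k$, i.e. to $2t_2 + \tfrac74 t_3 + t_4 - \sum_{k\geq2} t_k \geq d$, which reads $t_2 + \tfrac34 t_3 - \sum_{k\geq5}t_k \geq d$. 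This is implied by \eqref{eq:Hirzebruch} since $\sum_{k\geq5}(2k-9)t_k \geq \sum_{k\geq5}t_k$ for $k\geq5$ (as $2k-9\geq1$). The main obstacle, such as it is, is purely bookkeeping: getting the coefficient matching right when comparing my reorganized target inequality against $\tfrac12\eqref{eq:Hirzebruch}$, making sure every leftover coefficient on $t_k$ has the sign that lets me discard it via $t_k\geq0$, and handling the boundary cases $t_d=t_{d-1}=t_{d-2}=0$ (and $d\geq6$) under which \eqref{eq:Hirzebruch} is stated. No genuinely new idea is needed beyond \eqref{eq:Hirzebruch} itself.
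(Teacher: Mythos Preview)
Your overall strategy matches the paper's: express $H(C,\operatorname{Sing}(C))$ combinatorially, bound $-\sum kt_k$ via Hirzebruch's inequality \eqref{eq:Hirzebruch}, and combine. Your argument for $B_2\geq B_1$ is correct and in fact more explicit than the paper's (which simply observes that \eqref{eq:Hirzebruch} is sharper than \eqref{eq:ZZBauer}).

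There is, however, a real arithmetic slip in your first part that causes the coefficient-matching trouble you flag. From $d^2=d+\sum_{k}k(k-1)t_k$ one gets
\[
d^2-\sum_k k^2t_k \;=\; d+\sum_k k(k-1)t_k-\sum_k k^2t_k \;=\; d-\sum_k kt_k,
\]
so $H(C)\cdot f_0=d-f_1$, \emph{not} $d-\sum_k k(k-1)t_k$ as you wrote (your parenthetical remark actually had the right expression, but you then reverted to the wrong one). With the correct formula, the target \eqref{eq:Nouvelle} becomes, after multiplying by $f_0$,
\[
d-\sum_k kt_k\;\geq\;\tfrac{3}{2}d+2t_2+\tfrac{9}{8}t_3+\tfrac{1}{2}t_4-\tfrac{9}{2}\sum_k t_k,
\]
i.e.\ $\sum_k(9-2k)t_k\geq d+4t_2+\tfrac{9}{4}t_3+t_4$, which simplifies \emph{exactly} to $t_2+\tfrac{3}{4}t_3\geq d+\sum_{k\geq5}(2k-9)t_k$, namely \eqref{eq:Hirzebruch} itself. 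There is no slack to absorb and no ``wrong sign'' problem: the coefficients for $k\geq5$ are $\tfrac{9}{2}-k$, which match $-\tfrac{1}{2}(2k-9)$ on the nose. Your difficulties arose entirely from carrying $k(k-1)$ instead of $k$; once this is corrected, your sketch is the paper's proof.
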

For the Klein configuration of lines (see \cite[§ 4.1]{BDHH}) and
the Fermat configuration of 18 lines, inequality \ref{eq:Nouvelle}
is even an equality. 
\begin{proof}
Using the combinatorics, we have 
\[
H(C,Sing(C))=\frac{d^{2}-\sum_{k\geq2}k^{2}t_{k}}{\sum t_{k}}=\frac{d-\sum_{k\geq2}kt_{k}}{\sum t_{k}}.
\]
By \ref{eq:Hirzebruch}, we have $t_{2}+\frac{3}{4}t_{3}\geq d+\sum_{k\geq4}(2k-9)t_{k}$,
thus
\begin{equation}
-\sum_{k\geq2}kt_{k}\geq\frac{1}{2}(d+4t_{2}+\frac{9}{4}t_{3}+t_{4}-9\sum t_{k})\label{eq:ZZZ}
\end{equation}
and we obtain inequality \ref{eq:Nouvelle}. The fact that this new
inequality is sharper comes from the fact that inequality \ref{eq:Hirzebruch}
is better than \ref{eq:ZZBauer}.
\end{proof}

\bigskip

\noindent 

{\large{\setlength{\parindent}{0.5in}}}{\large \par}

{\large{Xavier Roulleau,}}{\large \par}

{\large{Laboratoire de Mathématiques et Applications, Université de
Poitiers,}}{\large \par}

{\large{Téléport 2 - BP 30179 - 86962 Futuroscope Chasseneuil, France}}{\large \par}

\texttt{\large{roulleau@math.univ-poitiers.fr}}

\begin{thebibliography}{99}

\bibitem{BDHH} {\sc Bauer T., Di Rocco S., Harbourne B., Huizenga J., Lundman A., Pokora P., Szemberg T.,} {\em Bounded negativity and arrangement of lines}, to appear in Int. Math. Res. Notices. 

\bibitem{BHK} {\sc Bauer T., Harbourne B., Knutsen A.L.; K\"uronya A., M\"uller-Stach S., Roulleau X., Szemberg T.}, {\em Negative curves on algebraic surfaces}, Duke Math. J. 162 (2013), 1877--1894.

\bibitem{BPS} Bauer T., Pokora P., Schmitz D., {\em On the boundedness of the denominators in the Zariski decomposition on surfaces}, arXiv 1411.2431

\bibitem{Ciliberto} {\sc Ciliberto C., Roulleau X.}, {\em On finiteness of curves with high canonical degree on a surface}, arXiv:1406.7478

\bibitem{Gao} {\sc Gao Y.}, {\em A note on finite abelian covers},  Sci. China Math. 54 (2011), no. 7, 1333--1342. 

\bibitem{Hirzebruch} {\sc Hirzebruch F.}, {\em Arrangement of lines and Algebraic surfaces}, Arithmetic and geometry, Vol. II, 113--140, Progr. Math., 36, Birkhäuser, Boston, Mass., 1983.

\bibitem{Hirz84} {\sc  Hirzebruch F.}, {\em Chern numbers of algebraic surfaces: an example}, Math. Ann. 266 (1984), no. 3, 351--356.

\bibitem{Hirz86} {\sc  Hirzebruch F.}, {\em Singularities of algebraic surfaces and characteristic numbers}, The Lefschetz centennial conference, Part I (Mexico City, 1984), 141--155,  Contemp. Math., 58, 1986. 

\bibitem{Holzapfel} {\sc Holzapfel R.-P.}, {\em Jacobi Theta Embedding of a Hyperbolic 4 Space with Cusps}, in Geometry, Integrability and Quantization, I. Mladenov and G. Naber (Eds), Corall Press, Sofia 2001, pp.11-63.


\bibitem{Kobayashi} {\sc Kobayashi R.}, {\em Uniformization of complex surfaces}, Kahler metric and moduli spaces, 313--394, Adv. Stud. Pure Math., 18-II, Academic Press, Boston, MA, 1990.

\bibitem{Koziarz} {\sc Koziarz V., Maubon J.}, {\em On the equidistribution of totally geodesic submanifolds in compact locally symmetric spaces and application to boundedness results for negative curves },  arXiv: 1407.6561

\bibitem{Miyaoka} {\sc Miyaoka Y.}, {\em The maximal number of quotient singularities on surfaces with given numerical invariants},  Math. Ann. 268 (1984), no. 2, 159--171.

\bibitem{Moeller} {\sc Moeller M., Toledo D.}, {\em Bounded negativity of self-intersection numbers of Shimura curves on Shimura surfaces } arXiv:1407.5181

\bibitem{Namba} {\sc Namba, M.}, {\em Branched coverings and algebraic functions},  Pitman Research Notes in Mathematics Series, 161. John Wiley Sons, Inc., New York, 1987. viii+201 pp. 


\bibitem{RU}   {\sc Roulleau X., Urzua G.,} {\em Chern slopes of simply connected complex surfaces of general type are dense in $[2,3]$},  ArXiv 1404.3074, to appear in Annals of Math.

\bibitem{Sakai} {\sc Sakai F.}, {\em Semistable curves on algebraic surfaces and logarithmic pluricanonical maps},  Math. Ann. 254 (1980), no. 2, 89--120.

\end{thebibliography}
\end{document}